\def\issn{{\sc ISSN} 1930-1235: }
\def\issueyear{2009}
\newtheorem{thm}{Theorem}[section]
\newtheorem{lemma}{Lemma}
\newtheorem{remark}{Remark}
\theoremstyle{definition}
\newcommand{\Aut}{\mbox {Aut}}
\newcommand{\ch}{\mbox {char }}
\newcommand{\bP}{{\mathbb P}^1}
\newcommand\F{\mathbb F}
\newcommand\C{\bold C}
\newcommand{\cH}{\mathcal H}
\newcommand\X{\mathcal X}
\newcommand\G{\bar G}
\def\si{\sigma}
\def\ta{\tau}
\def\m{\mu}
\def\p{\phi}
\def\P{\Phi}
\def\d{\delta}
\def\U{\mathcal U}
\def\s{s}
\def\t{t}
\def\r{r}
\begin{document}

\title[Automorphisms of curves]{Automorphism groups of cyclic curves defined over finite fields of any characteristics}




\keywords{algebraic curves;  automorphism groups}

\maketitle

\begin{center}
{\sc R. Sanjeewa}\\
\vspace*{1ex}
\emph{Dept. of Mathematics\\ Oakland University\\Rochester, MI. USA.\\
rsanjeew@oakland.edu}
\end{center}

\vspace*{3ex}

\begin{abstract}
In this paper  we determine automorphism groups of cyclic algebraic curves defined over finite fields
of any characteristic.
\end{abstract}

\section{Introduction}

Let $g\geq 2$ be a fixed integer and $k$ an algebraically closed field of characteristic $p\geq 0$ and let $\X_g$ is an irreducible algebraic curve defined over $k$. Determining the list of group which occur as automorphism groups of $\X_g$ is very old problem in mathematics. Mathematicians are working on this problem more than a century now. In 1893 Hurwitz proved that the order of such group is $\leq 84 (g-1)$ when $p=0$; see \cite{AH}. After 80 years, Stichtenoth et al. proved that the  bound is $16 g^4$ for $p> 0$; see \cite{ST2}.  In any case, the group of automorphisms is a finite group.  There are hundreds of papers on the structure of such groups, determining the equation of the curve when the group is given, determining the group when the curve is given, etc. T. Shaska determines the list of groups for hyperelliptic curves when $p=0$, see \cite{TS} and K. Magaard et. al. determine the list of groups for any given $g\geq 2$ when $p=0$. Such results are based on an exhaustive computer search of all possible ramification structures for a given $g$ and a deep understanding of Hurwitz spaces for a given genus $g$, a group $G$, and the ramification structure for the covering $\X_g \to \X^G_g$. The case of positive characteristic is still an open problem.

In most cases there is a cyclic subgroup $C_n \vartriangleleft G$ such that $g(\X^{C_n})=0$. Such curves are called \emph{cyclic curves}. In this paper we determine groups $G$ which occur as automorphism groups of cyclic curves in any characteristic and for any genus $g\geq 2$.

In section 2, we cover basic facts on automorphism groups of cyclic curves. Let $G=\Aut(\X_g)$ automorphism group of cyclic curve $\X_g$, $C_n=\left\langle w\right\rangle$ such that $g(\X^{C_n})=0$. The group $\G:= \Aut(\X_g)/\langle w \rangle$ is called the reduced automorphism group. This group $\G$ is embedded in $PGL_2(k)$ and therefore is isomorphic to one of $C_m $, $ D_m $, $ A_4 $, $ S_4 $, $ A_5 $, \emph{a semidirect product of elementary Abelian group  with  cyclic group}, $ PSL(2,q)$ and $PGL(2,q)$ cf. Lemma ~\ref{l1}. We determine a rational functions $\phi(x)$ that generates the fixed field $k(x)^{\G}$.

In the section 3, we determine ramification signature of each cover $\P(x):\X \to \X^G$ by using the ramification of $\G$. By considering the lifting of ramified points of the cover $\P$, we divide each $\G$ into sub cases. Then we are able to find automorphism group $G$ for each sub cases. For some cases, we give presentation for $G$. The moduli space of covers $\P$ with fixed group $G$ and ramification signature $\C$ is a Hurwitz space $\cH$. There is a map from $\cH$ to the moduli space of genus $g$ algebraic curves $\cH_g$. The image of this map is a subvariety of $\cH_g$ and denoted by $\cH(G,\C)$. Since we know the signature of the curve, we use Hurwitz genus formula to calculate dimension of $\cH(G,\C)$. We list all possible signatures $\C$ and dimension of the locus  $\cH(G,\C)$. Then we list automorphism groups as theorems for each $\G$.

In section 4, we combine Theorems ~\ref{th9} - ~\ref{th3} altogether to make main theorem. In our main theorem, we list all possible automorphism groups of genus $g\geq 2$ cyclic curves define over the finite field of characteristic $p\neq2$. We are able to give presentations for some of automorphism groups.

\medskip

\textbf{Notation :} Through this paper $k$ denotes an algebraically closed field of characteristic $\neq $2,
$g$ an integer $\geq $2, and $\X_g$ a cyclic curve of genus $g$ defined over $k$. For a given
curve $\X$, $g(\X)$ denotes its genus.


\section{Preliminaries}

Let $k$ be an algebraically closed field of characteristic $p$ and $\X_g$ be a genus $g$ cyclic
curve given by the equation $y^n=f(x)$ for some $f \in k[x]$. Let $K:=k(x,y)$ be the function field of $\X_g$. Then
$k(x)$ is degree n genus zero subfield of $K$. Let $G=\Aut(K/k)$. Since $C_n:=Gal(K/k(x))=\langle w
\rangle$, with $w^n=1$ such that $\langle w \rangle \lhd G$, then group $\G:=G/C_n$ and $\G \leq
PGL_2(k)$. Hence $\G$ is isomorphic to one of the following: $C_m $, $ D_m $, $ A_4 $, $ S_4 $, $ A_5
$, \emph{semidirect product  of elementary  Abelian  group  with  cyclic  group}, $ PSL(2,q)$ and $PGL(2,q)$,
see \cite{VM}.

The group $\G$ acts on $k(x)$ via the natural way. The fixed field is a genus 0 field, say $k(z)$. Thus $z$
is a degree $|\G|$ rational function in $x$, say $z=\p(x)$. We illustrate with the following diagram:
\[
\xymatrix{
K =k(x, y)\ar@{-}[d]^{\, \, \, C_n} \ar@/_1.8pc/[dd]_{\, \, G} \\
k(x, y^n) \ar@{-}[d]^{\, \, \, \G}  \\ k (z)  \\ }
\qquad \qquad \qquad \xymatrix{ \X_g \ar[d]_{\, \, \phi_0 }^{\, \, \,
C_n} \ar@/_1.8pc/[dd]_{\, \, \Phi} \\
\bP \ar[d]_{\, \, \, \phi}^{\, \, \, \G} \\ \bP  \\ }
\]

Let $\p_0:\X_g \to \bP$ be the cover which corresponds to the degree n extension $K/k(x)$. Then $\P:=\p \circ \p_0$ has monodromy group $G:=\Aut(\X_g)$. From the basic covering theory, the group $G$ is embedded in the group $S_l$ where $l=deg$ $\P$. There is an $r$-tuple $\overline{\si}:=(\si_1,...,\si_r)$, where $\si_i \in S_l$ such that $\si_1,...,\si_r$ generate $G$ and $\si_1...\si_r=1$. The signature of $\P$ is an $r$-tuple of conjugacy classes $\C:=(C_1,...,C_r)$ in $S_l$ such that $C_i$ is the conjugacy class of $\si_i$. We use the notation $n$ to denote the conjugacy class of permutations which is cycle of length $n$. Using the signature of $\p:\bP \to \bP$ one finds out the signature of $\P:\X_g \to \bP$ for any given $g$ and $G$.

Let $E$ be the fixed field of G, the Hurwitz genus formula states that
\begin{equation}\label{e1}
2(g_K-1)=2(g_E-1)|G|+deg(\mathfrak{D}_{K/E})
\end{equation}
with $g_K$ and $g_E$ the genera of $K$ and $E$ respectively and $\mathfrak{D}_{K/E}$ the different of $K/E$. Let $\overline{P}_1, \overline{P}_2, ..., \overline{P}_r$ be ramified primes of $E$. If we set $d_i=deg(\overline{P}_i)$ and let $e_i$ be the ramification index of the $\overline{P}_i$ and let $\beta_i$ be the exponent of $\overline{P}_i$ in $\mathfrak{D}_{K/E}$. Hence, (1) may be written as
\begin{equation}\label{e2}
2(g_K-1)=2(g_E-1)|G|+|G|\sum_{i=1}^{r}\frac{\beta_i}{e_i}d_i
\end{equation}

If $\overline{P}_i$ is tamely ramified then $\beta_i=e_i-1$ or if $\overline{P}_i$ is wildly ramified then $\beta_i=e_i^*q_i+q_i-2$ with $e_i=e_i^*q_i$, $e_i^*$ relatively prime to $p$, $q_i$ a power of $p$ and $e_i^*|q_i-1$. For fixed $G$, $\C$ the family of covers $\P:\X_g\to \bP$ is a Hurwitz space $\cH(G,\C)$. $\cH(G,\C)$ is an irreducible algebraic variety of dimension $\d(G,\C)$. Using equation ~(\ref{e2}) and signature $\C$ one can find out the dimension for each $G$.

Next we want to determine the cover $z=\p(x):\bP \to \bP$ for all characteristics. Notice that the case of $char(k)=0$ is known, see \cite{TS}.

We define a semidirect product of elementary Abelian group  with  cyclic group as follows.
\[K_m:=\left\langle\left\{ \si_a, \t |a \in \U_m\right\}\right\rangle \]
where $\t(x)=\xi^2x, \quad \si_a(x)=x+a,$ for each $a \in \U_m$,
\[\U_m :=\{a \in k| (a\prod_{j=0}^{\frac{p^t-1}{m}-1}(a^m-b_j))=0\}\]
$b_j \in \F^*_q$, $m|p^t-1$ and $\xi$ is a primitive $2m$-th root of unity. Obviously $\U_m$ is a subgroup of the additive group of $k$.

\begin{lemma}\label{l1}
Let $k$ be an algebraically closed field of characteristic $p$, $\G$ be a finite subgroup of $PGL_2(k)$ acting on the field $k(x)$. Then, $\G$ is isomorphic to one of the following groups $C_m$, $D_m$, $A_4$, $S_4$, $A_5$, $U= C_p^t$, $K_m$, $PSL_2(q)$ and $PGL_2(q)$, where $q=p^f$ and $(m,p)=1$. Moreover, the fixed subfield $k(x)^{\G}=k(z)$ is given by Table ~\ref{t1}, where $\alpha =\frac{q(q-1)}{2}, \quad \beta= \frac{q+1}{2}$. $H_t$ is a subgroup of the additive group of $k$ with $| H_t | = p^t$ and $b_j \in k^*$.
\end{lemma}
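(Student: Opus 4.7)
The isomorphism classification of the finite subgroups $\G \leq PGL_2(k)$ in characteristic $p$ is Dickson's theorem, which I would invoke directly via \cite{VM}; the bulk of the work is then in exhibiting, for each conjugacy class in that list, an explicit rational generator $z = \p(x)$ of the degree-$|\G|$ fixed field $k(x)^{\G}$. Since $PGL_2(k)$ acts as fractional linear transformations on $k(x)$, I would first normalize each $\G$ by conjugation into a convenient model --- placing its fixed points at $0$ and $\infty$, or its axis of symmetry along prescribed roots of unity --- and then read off the invariant as an elementary symmetric function in the $\G$-orbit of a generic point, with the degree check $[k(x):k(z)] = |\G|$ serving as confirmation.

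The small cases are immediate. Writing $C_m = \langle x \mapsto \zeta_m x\rangle$ with $(m,p) = 1$ gives $z = x^m$; adjoining the involution $x \mapsto 1/x$ yields $z = x^m + x^{-m}$ for $D_m$. The elementary abelian case $U = C_p^t$ is realized as the translations $x \mapsto x + a$ for $a$ in an $\F_p$-subspace $H_t \subseteq k$, so $z = \prod_{a \in H_t}(x-a)$, the standard $\F_p$-linear additive polynomial of degree $p^t$. The polyhedral groups $A_4$, $S_4$, $A_5$ (when $p \nmid |\G|$) reduce to classical binary-form invariant theory after placing a vertex axis at $\{0,\infty\}$; the characteristic-zero formulas transfer unchanged because the group orders are coprime to $p$.

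The semidirect product $K_m$ requires a two-step invariant construction: first form the $U$-invariant $P(x) = \prod_{a \in \U_m}(x-a)$, then use that $\U_m$ is a union of $\t$-orbits with eigenvalue structure encoded by the $b_j$ to promote $P$ to a rational function also fixed by $\t: x \mapsto \xi^2 x$. The hardest cases are $PSL_2(q)$ and $PGL_2(q)$, for which I would use the classical Dickson invariants. For $PGL_2(q)$, one writes down a candidate $z$ built from $x^q - x$ and $x^{q^2} - x$, checks invariance under an upper triangular Borel and under the single involution $x \mapsto 1/x$ (which together generate $PGL_2(q)$ via the Bruhat decomposition), and confirms $\deg z = q^3 - q = |PGL_2(q)|$; the $PSL_2(q)$ invariant is then obtained from the $PGL_2(q)$ one by a suitable square-root extraction on an index-two factor.

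The main obstacle will be pinning down the precise exponents $\alpha = q(q-1)/2$ and $\beta = (q+1)/2$ appearing in the table for the $PSL_2(q)$ and $PGL_2(q)$ rows. These reflect how the three orbit types of $\G$ on $\bP(k)$ --- points of $\bP(\F_q)$, points of $\bP(\F_{q^2}) \setminus \bP(\F_q)$, and a third generic orbit --- contribute to the ramification of $\bP \to \bP/\G$. Getting the combinatorics right requires careful orbit bookkeeping against the orders of the point stabilizers in $PSL_2(q)$ versus $PGL_2(q)$; once this is settled, the remaining entries of the table follow from the explicit models above.
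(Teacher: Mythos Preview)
Your plan is correct and aligns closely with the paper's own proof: both invoke \cite{VM} for Dickson's classification, normalize each $\G$ to an explicit model with named generators, and then establish that the tabulated $z$ lies in $k(x)^{\G}$, with the degree equality $[k(x):k(z)]=\deg\phi=|\G|$ forcing $k(z)=k(x)^{\G}$. The paper's argument is pure verification --- for each row it writes down generators (e.g.\ $\sigma(x)=\xi x$, $\tau(x)=1/x$, $\phi(x)=x+1$ for $PGL_2(q)$) and checks algebraically that each fixes the given $z$ --- whereas you frame the same computation constructively, building $z$ as an orbit product or via Dickson invariants and then confirming generation by degree; in practice the two routes coincide generator-by-generator. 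One small point: your treatment of the polyhedral groups assumes $p\nmid|\G|$, but the table also records a separate $A_5$ formula in characteristic~$3$; the paper handles this only implicitly, so you may wish to note that the same generator-check works there with the modified $z$.
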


\begin{table}
\begin{center}
\begin{tabular}{cccc}
$Case$ & $\G$ & $z$ & $Ramification$  \\
\hline \\
1 & $C_m$, $(m,p)=1$& $x^m$ & $(m,m)$\\ \\
2 & $D_{2m}$, $(m,p)=1$& $x^m+\frac{1}{x^m}$ & $(2,2,m)$\\ \\
3 & $A_4, \, p\neq 2, 3$ & $\frac{x^{12}-33x^8-33x^4+1}{x^2(x^4-1)^2}$ & $(2,3,3)$\\ \\
4 & $S_4, \, p\neq 2, 3$ & $\frac{(x^8+14x^4+1)^3}{108(x(x^4-1))^4}$ & $(2,3,4)$\\ \\
5 & $A_5, \,p\neq 2, 3, 5$ & $\frac{(-x^{20}+228x^{15}-494x^{10}-228x^5-1)^3}{(x(x^{10}+11x^5-1))^5}$ & $(2,3,5)$\\ \\
  & $A_5, \,p=3$ & $\frac{(x^{10}-1)^6}{(x(x^{10}+2ix^5+1))^5}$ & $(6,5)$\\ \\
6 & $U$ & $  \displaystyle{\prod_{a \in H_t}} (x+a)$ & $(p^t)$\\ \\
7 & $K_m$ & $(x   \displaystyle{\prod_{j=0}^{\frac{p^t-1}{m}-1} } (x^m-b_j))^m$ & $(mp^t,m)$ \\ \\
8 & $PSL(2,q), \,p\neq 2$ & $\frac{((x^q-x)^{q-1}+1)^{\frac{q+1}{2}}}{(x^q-x)^{\frac{q(q-1)}{2}}}$ & $(\alpha,\beta)$ \\ \\
9 & $PGL(2,q)$ & $\frac{((x^q-x)^{q-1}+1)^{q+1}}{(x^q-x)^{q(q-1)}}$ & $(2\alpha,2\beta)$ \\ \\
\end{tabular}
\caption{Rational functions correspond to each $\G$} \label{t1}
\end{center}
\end{table}

\begin{proof}  It is well known that $\G$ is isomorphic to $C_n, D_n, A_4, S_4, A_5, U, K_m,$ $PSL(2,q)$ and $PGL(2,q)$; see
\cite{VM}.  Cases 1) .. 5) are the same as in characteristic zero; see \cite{ShTh}. We briefly display the
generators of $\G$ is such cases. The reader can check that $z$ is fixed by such generators.

\textbf{Case 1:} If $\G \cong C_m$ then $C_m =\left\langle \si \right\rangle$, where $\si(x)=\zeta
x$, $ \zeta$ is a primitive $m^{th}$ root of unity. So $\si(z)=(\zeta x)^m=\zeta^mx^m=x^m=z.$

\textbf{Case 2:} If $\G \cong D_{2m}$ then $D_{2m}=\left\langle \si, \t \right\rangle$, where
$\si(x)=\xi x$, $\t(x)=\frac{1}{x}$,  $\xi$ is primitive $(2m)^{th}$ root of unity. Hence, $\si$ and
$\t$ fix $z$.

\textbf{Case 3:} If $\G \cong A_4$ then $A_4=\left\langle \si, \m \right\rangle$, where
$\si(x)=-x$, $\m(x)=i\frac{x+1}{x-1}$, $i^2=-1$. Therefore, $\si(z)=z$ and $\m (z)=z$.

\textbf{Case 4:} If $\G \cong S_4$ then $S_4=\left\langle \si, \m \right\rangle$, where $\si(x)=ix$,
$\m(x)=i\frac{x+1}{x-1}$, $i^2=-1$. Therefore, $\si, \m$ fix $z$.

\textbf{Case 5:} If $\G \cong A_5$ then $A_5=\left\langle \si, \rho \right\rangle$, where
$\si(x)=\xi x$, $\rho(x)=-\frac{x+b}{bx-1}$, $\xi$ is primitive fifth root of unity and $b=-i(\xi+\xi^4)$,
$i^2=-1$. One can check that $\si, \rho$ fix $z$.

\textbf{Case 6:} If $\G \cong U$ then $U=\left\langle\left\{ \si_a|a\in H_t\right\} \right\rangle$, where
$\si_a(x)=x+a$ with $a \in H_t$. Therefore,
\begin{equation*}
\begin{split}
\si_a(z) & =\prod_{a_1 \in H_t}(x+a_1+a)=\prod_{a_2 \in H_t}(x+a_2)=\prod_{a \in H_t}(x+a)=z.\\
\end{split}
\end{equation*}
and $a_2 =a+a_1\in H_t$.

\textbf{Case 7:} If $\G \cong K_m$ then $K_m=\left\langle\left\{ \si_a, \t |a \in
\U_m\right\}\right\rangle$, where $\t(x)=\xi^2x$, $\si_a(x)=x+a$ for each $a\in
\U_m:=\{a \in k| (a\prod_{j=0}^{\frac{p^t-1}{m}-1}(a^m-b_j))=0\}\leq H_t$, $\xi$ is a primitive
$2m$-th root of unity. So,
\begin{equation*}
\begin{split}
\t(z)&=((\xi^2x)\prod_{j=0}^{\frac{p^t-1}{m}-1}((\xi^2x)^m-b_j))^m=(x\prod_{j=0}^{\frac{p^t-1}{m}-1}(x^m-b_j))^m=z.\\
\si_a(z) &
=((x+a)\prod_{j=0}^{\frac{p^t-1}{m}-1}((x+a)^m-b_j))^m=(x\prod_{j=0}^{\frac{p^t-1}{m}-1}(x^m-b_j))^m=z.
\end{split}
\end{equation*}

\textbf{Case 8:} If $\G \cong PSL( 2,q)$ then $PSL(2,q)=\left\langle \si, \t, \p \right\rangle$, where
$\si(x)=\xi^2 x$, $\t(x)=-\frac{1}{x}$, $\p(x)=x+1$ and $\xi$ is a primitive $(q-1)$-th root of unity.
So,
\begin{equation*}
\begin{split}
\si(z) & =\frac{(((\xi^2 x)^q-(\xi^2 x))^{q-1}+1)^{\frac{q+1}{2}}}{((\xi^2 x)^q-(\xi^2
x))^{\frac{q(q-1)}{2}}}
=\frac{((x^q-x)^{q-1}+1)^{\frac{q+1}{2}}}{(x^q-x)^{\frac{q(q-1)}{2}}}=z.\\
\t(z) & =\frac{(((-\frac{1}{x})^q-(-\frac{1}{x}))^{q-1}+1)^{\frac{q+1}{2}}}{((-\frac{1}{x})^q-(-\frac{1}{x}))^{\frac{q(q-1)}{2}}}=\frac{((x^q-x)^{q-1}+1)^{\frac{q+1}{2}}}{(x^q-x)^{\frac{q(q-1)}{2}}}=z.\\
\p(z)
&=\frac{(((x+1)^q-(x+1))^{q-1}+1)^{\frac{q+1}{2}}}{((x+1)^q-(x+1))^{\frac{q(q-1)}{2}}}=\frac{((x^q-x)^{q-1}+1)^{\frac{q+1}{2}}}{(x^q-x)^{\frac{q(q-1)}{2}}}=z.
\end{split}
\end{equation*}

\textbf{Case 9:} If $\G \cong PGL(2,q)$ then $PGL(2,q)=\left\langle \si, \t, \p \right\rangle$, where
$\si(x)=\xi x$, $\t(x)=\frac{1}{x}$, $\p(x)=x+1$ and $\xi$ is a primitive $(q-1)$-th root of unity.
Hence,
\begin{equation*}
\begin{split}
\si(z) & =\frac{(((\xi x)^q-(\xi x))^{q-1}+1)^{q+1}}{((\xi x)^q-(\xi x))^{q(q-1)}}
=\frac{((x^q-x)^{q-1}+1)^{q+1}}{(x^q-x)^{q(q-1)}}=z.\\
\t(z) &= \frac{((( \frac {1} {x} )^q -( \frac{1} {x}))^{q-1}+1)^{q+1}  }
{(( \frac{1}{x})^q - (\frac{1} {x}))^{q(q-1)} } =\frac{((x^q-x)^{q-1}+1)^{q+1} } {(x^q-x)^{q(q-1)} }=z .\\
\p(z) & =\frac{(((x+1)^q-(x+1))^{q-1}+1)^{q+1} } {((x+1)^q-(x+1))^{q(q-1)} } =
\frac{((x^q-x)^{q-1}+1)^{q+1} } {(x^q-x)^{q(q-1)} } =z
\end{split}
\end{equation*}

This completes the proof.
\end{proof}


\section{Automorphism groups of a cyclic curves}

In this section we determine groups which occur as automorphism group $G$ of genus $g\geq 2$ cyclic curves, their signatures and the dimension of the locus $\cH(G,\C)$. We know that $\G:=G/G_0$, where $G_0:=Gal(k(x,y)/k(x))$ and  $\G$ is isomorphic to $C_m$, $D_m$, $A_4$, $S_4$, $A_5$, $U$, $K_m$, $PSL(2,q)$, $PGL(2,q)$. By considering the lifting of ramified points in each $\G$, we divide each $\G$ into sub cases. We determine signature of each sub case by looking the behavior of lifting and ramification of $\G$. Using that signature and Equation ~\ref{e2} we calculate $\delta$ for each case. We list all possible automorphism groups $G$ as separate theorems for each $\G$.

\subsection{The case 2g+1$\geq$ p$>$ 5.} Throughout this subsection we assume that $2g+1\geq p> 5$.

\begin{remark}
The case $p>2g+1$ is same as $char=0$; see \cite{TS}

\end{remark}

\begin{thm} \label{th1}
Let $g \geq$ 2 be a fixed integer, $\X$ a genus $g$ cyclic curve, $G=\Aut(\X)$ and $C_n\triangleleft G$ such that $g(\X^{C_n})=0$. The signature of cover $\P(x):\X \to \X^G$ and dimension $\delta$ is given in Table ~\ref{t2}. In Table ~\ref{t2}, $m=|PSL_2(q)|$ for cases 38-41 and $m=|PGL_2(q)|$ for cases 42-45.
\end{thm}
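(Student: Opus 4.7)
The plan is to work through the nine possibilities for the reduced automorphism group $\G$ listed in Lemma~\ref{l1} one at a time. For each $\G$ the signature of the bottom cover $\phi\colon\bP\to\bP$ is already recorded in the last column of Table~\ref{t1}; what remains is to glue that data to the cyclic cover $\phi_0\colon\X_g\to\bP$ and extract the signature and dimension of the composite $\Phi=\phi\circ\phi_0$.

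First I would fix $\G$ and let $B\subset\bP$ denote the branch locus of $\phi$ together with its known ramification indices. Since $\phi_0$ is the degree-$n$ cover defined by $y^n=f(x)$, its branch locus is determined by the multiplicities of the roots of $f$ modulo $n$, so it is simply the set of points of $\bP$ above which $\phi_0$ fails to be \'etale. The sub-cases advertised in the statement correspond to partitioning the $\G$-orbits on $\bP$ according to whether or not an orbit meets the branch locus of $\phi_0$; for each $\G$ there are only finitely many such partitions, because the orbit structure of $\G$ on $\bP$ is determined by $\G$ and can be read off from the generators displayed in the proof of Lemma~\ref{l1}.

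Second, for each sub-case I would compute the signature $\C$ of $\Phi\colon\X_g\to\X_g^{G}\cong\bP$. At a point $P\in\X_g$ the ramification index of $\Phi$ is built from the index $e$ of $\phi$ at $\phi_0(P)$ and the index $e'$ of $\phi_0$ at $P$: in the tame situation this is $e\cdot e'$ if $P$ lies above a common branch point, and $e$ or $e'$ otherwise, while the fiber size above each branch point of $\Phi$ is $|G|$ divided by this index. Substituting the resulting triples $(e_i,\beta_i,d_i)$ into~(\ref{e2}) yields a relation among $g$, $n$, and the combinatorial parameters of $\C$ which both verifies that the genus is indeed $g$ and constrains the admissible $n$. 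The dimension $\delta(G,\C)$ then follows from the standard count $\delta=r-3$, where $r$ is the length of the signature (covers of $\bP$ with $r$ branch points, modulo the three-dimensional automorphism group of $\bP$).

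The hard part will be the wild-ramification bookkeeping in the sub-cases where $p$ divides either $n$ or $|\G|$, especially for $\G\in\{U,K_m,PSL_2(q),PGL_2(q)\}$. There both $\phi$ and $\phi_0$ can contribute to the different over a common prime, the exponent $\beta_i$ is no longer $e_i-1$ but $e_i^{*}q_i+q_i-2$ subject to $e_i^{*}\mid q_i-1$, and one must check that these constraints are compatible with the cyclic $C_n$-action imposed by $\phi_0$. Once this bookkeeping is organised case by case, the remaining work is a mechanical enumeration, which is why the natural form of the conclusion is the explicit list in Table~\ref{t2} rather than a single closed formula.
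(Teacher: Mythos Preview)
Your proposal is correct and follows essentially the same route as the paper: case-split on $\G$, enumerate the finitely many sub-cases according to how the branch data of $\phi$ and $\phi_0$ interact, read off the signature of $\Phi$, and recover $\delta$ from $\delta=r-3$ combined with Riemann--Hurwitz (equation~(\ref{e2})), with the wild-ramification correction $\beta_i=e_i^{*}q_i+q_i-2$ invoked exactly for $\G\in\{U,K_m,PSL_2(q),PGL_2(q)\}$. The only cosmetic difference is that the paper phrases the sub-cases algebraically---whether a generator $\sigma\in\G$ of order $k$ lifts in $G$ to an element of order $k$ or of order $nk$---whereas you phrase them geometrically in terms of which $\G$-orbits on $\bP$ meet the branch locus of $\phi_0$; these are equivalent, since $\sigma$ lifts to order $nk$ precisely when its fixed point lies under a branch point of $\phi_0$.
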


\renewcommand{\arraystretch}{2}
\begin{longtable}{|c|c|c|c|}
\hline \hline
\multicolumn{1}{|c|}{$\#$} & \multicolumn{1}{|c|}{$\G$} & \multicolumn{1}{|c|}{$\delta(G,\C)$} & \multicolumn{1}{|c|}{$\C=(C_1,...,C_r)$}\\
\hline \hline
\endfirsthead
\hline \hline
\multicolumn{1}{|c|}{$\#$} & \multicolumn{1}{|c|}{$\G$} & \multicolumn{1}{|c|}{$\delta(G,\C)$} &  \multicolumn{1}{|c|}{$\C=(C_1,...,C_r)$}\\
\hline \hline
\endhead
\hline
\multicolumn{4}{r}{\itshape continued on the next page}\\
\endfoot
\multicolumn{2}{r}{ }
\endlastfoot
$1$ &$(p,m)=1$ & $\frac{2(g+n-1)}{m(n-1)}-1$ &  $(m,m,n,...,n)$ \\
$2$ & $C_m$ & $\frac{2g+n-1}{m(n-1)}-1$ &  $(m,mn,n,...,n)$\\
$3$ &  & $\frac{2g}{m(n-1)}-1$ &   $(mn,mn,n,...,n)$\\
\hline
$4$ & $(p,m)=1$ & $\frac{g+n-1}{m(n-1)}$ &  $(2,2,m,n,...,n)$  \\
$5$ &  & $\frac{2g+m+2n-nm-2}{2m(n-1)}$ &  $(2n,2,m,n,...,n)$  \\
$6$ & $D_{2m}$ & $\frac{g}{m(n-1)}$ &  $(2,2,mn,n,...,n)$ \\
$7$ & &$\frac{g+m+n-mn-1}{m(n-1)}$ &  $(2n,2n,m,n,...,n)$  \\
$8$ &  & $\frac{2g+m-mn}{2m(n-1)}$ &  $(2n,2,mn,n,...,n)$  \\
$9$ &  & $\frac{g+m-mn}{m(n-1)}$ &  $(2n,2n,mn,n,...,n)$  \\
\hline \hline
$10$ &  & $\frac{n+g-1}{6(n-1)}$ &  $(2,3,3,n,...,n)$  \\
$11$ &$ A_4$ & $\frac{g-n+1}{6(n-1)}$ &  $(2,3n,3,n,...,n)$ \\
$12$ &  & $\frac{g-3n+3}{6(n-1)}$ &  $(2,3n,3n,n,...,n)$  \\
$13$ &  & $\frac{g-2n+2}{6(n-1)}$ &  $(2n,3,3,n,...,n)$  \\
$14$ &  & $\frac{g-4n+4}{6(n-1)}$ &  $(2n,3n,3,n,...,n)$  \\
$15$ &  & $\frac{g-6n+6}{6(n-1)}$ &  $(2n,3n,3n,n,...,n)$  \\
\hline \hline
$16$ &  & $\frac{g+n-1}{12(n-1)}$ &  $(2,3,4,n,...,n)$  \\
$17$ &  & $\frac{g-3n+3}{12(n-1)}$ &  $(2,3n,4,n,...,n)$  \\
$18$ &  &  $\frac{g-2n+2}{12(n-1)}$ &  $(2,3,4n,n,...,n)$ \\
$19$ &  & $\frac{g-6n+6}{12(n-1)}$ &  $(2,3n,4n,n,...,n)$  \\
$20$ &  $S_4$ & $\frac{g-5n+5}{12(n-1)}$ &  $(2n,3,4,n,...,n)$  \\
$21$ &  & $\frac{g-9n+9}{12(n-1)}$ &  $(2n,3n,4,n,...,n)$  \\
$22$ &  & $\frac{g-8n+8}{12(n-1)}$ &  $(2n,3,4n,n,...,n)$  \\
$23$ &  & $\frac{g-12n+12}{12(n-1)}$  & $(2n,3n,4n,n,...,n)$  \\
\hline \hline
$24$ &  & $\frac{g+n-1}{30(n-1)}$ &  $(2,3,5,n,...,n)$  \\
$25$ &  & $\frac{g-5n+5}{30(n-1)}$ &  $(2,3,5n,n,...,n)$ \\
$26$ &  & $\frac{g-15n+15}{30(n-1)}$  & $(2,3n,5n,n,...,n)$  \\
$27$ &  & $\frac{g-9n+9}{30(n-1)}$ &  $(2,3n,5,n,...,n)$  \\
$28$ &  $A_5$ & $\frac{g-14n+14}{30(n-1)}$ &  $(2n,3,5,n,...,n)$  \\
$29$ &  & $\frac{g-20n+20}{30(n-1)}$ &  $(2n,3,5n,n,...,n)$  \\
$30$ &  & $\frac{g-24n+24}{30(n-1)}$ &  $(2n,3n,5,n,...,n)$  \\
$31$ &  & $\frac{g-30n+30}{30(n-1)}$ &  $(2n,3n,5n,n,...,n)$  \\
\hline \hline
$32$ &  & $\frac{2g+2n-2}{p^t(n-1)}-2$ &  $(p^t,n,...,n)$  \\
$33$ & $U$ & $\frac{2g+np^{t}-p^t}{p^t(n-1)}-2$ &   $(np^t,n,...,n)$ \\
\hline \hline
$34$ &  & $\frac{2(g+n-1)}{mp^t(n-1)}-1$ &   $(mp^t,m,n,...,n)$  \\
$35$ &  & $\frac{2g+2n+p^t-np^t-2}{mp^t(n-1)}-1$ &  $(mp^t,nm,n,...,n)$  \\
$36$ & $K_m$ & $\frac{2g+np^t-p^{t}}{mp^t(n-1)}-1$ &   $(nmp^t,m,n,...,n)$\\
$37$ &  & $\frac{2g}{mp^t(n-1)}-1$ &   $(nmp^t,nm,n,...,n)$ \\
\hline \hline
$38$& & $\frac{2(g+n-1)}{m(n-1)}-1$ &   $(\alpha,\beta,n,...,n)$  \\
$39$& $PSL_2(q)$  & $\frac{2g+q(q-1)-n(q+1)(q-2)-2}{m(n-1)}-1$ &   $(\alpha,n\beta,n,...,n)$  \\
$40$& & $\frac{2g+nq(q-1)+q-q^2}{m(n-1)}-1$ &   $(n\alpha,\beta,n,...,n)$  \\
$41$& & $\frac{2g}{m(n-1)}-1$ &   $(n\alpha,n\beta,n,...,n)$  \\
\hline \hline
$42$& & $\frac{2(g+n-1)}{m(n-1)}-1$ &   $(2\alpha,2\beta,n,...,n)$  \\
$43$& $PGL_2(q)$  & $\frac{2g+q(q-1)-n(q+1)(q-2)-2}{m(n-1)}-1$ &   $(2\alpha,2n\beta,n,...,n)$  \\
$44$& & $\frac{2g+nq(q-1)+q-q^2}{m(n-1)}-1$ &   $(2n\alpha,2\beta,n,...,n)$ \\
$45$& & $\frac{2g}{m(n-1)}-1$ &   $(2n\alpha,2n\beta,n,...,n)$ \\
\hline \hline
\caption{The signature $\C$ and dimension $\d$ for $\ch >5 $} \label{t2}
\end{longtable}

\begin{proof}
Let $n$ be the number of branch points of $\P$. Then $\d=n-3$; see \cite{MS}. We know that
$\p_0:\X_g\to \bP$ corresponds to degree $n$ extension $K/k(x)$.

\textbf{Case $\G \cong C_m$ :} The ramification of $\p:\bP_x \to \bP_z$ is $(m,m)$. i.e. $C_m=\left\langle \si, \ta | \si^m=\ta^m =\si\ta=1\right\rangle$. where $\ta= \si^{-1}$.

\textbf{(1)} If $\si$ and $\ta$ both lift to elements of order $m$ in $G$, then conjugacy classes
$\C=(m,m,n,...,n)$. By Riemann Hurwitz formula, we have
\[  2(g-1)=2(0-1)mn+mn\left(\left(\frac{m-1}{m}\right)\cdot 2 + \left(\frac{n-1}{n}\right)\left(\delta+1\right)\right)
\]
Then $\d=\frac{2(g+n-1)}{m(n-1)}-1$.\\

\textbf{(2)} If either $\si$ or $\ta$ lifts to an  element of order $mn$ in $G$, then ramification $\C=(mn,m,n,...,n)$. By Riemann Hurwitz
formula, we have
\[  2(g-1)=2(0-1)mn+mn\left(\left(\frac{m-1}{m}\right)+\left(\frac{nm-1}{nm}\right)+
\left(\frac{n-1}{n}\right)\left(\delta+1\right)\right)
\]
Then $\d=\frac{2g+n-1}{m(n-1)}-1$. \\

\textbf{(3)} If $\si$ and $\ta$ both lift to elements of order $mn$ in $G$, then ramification,
$\C=(mn,mn,n,...,n)$. By Riemann Hurwitz formula, we have
\[  2(g-1)=2(0-1)mn+mn\left(\left(\frac{mn-1}{mn}\right)\cdot2 + \left(\frac{n-1}{n}\right)\left(\delta+1\right)\right).
\]
Then $\d=\frac{2g)}{m(n-1)}-1$. \\

\textbf{Case $\G \cong D_{2m}$ :} The ramification of $\p:\bP_x \to \bP_z$ is $(2,2,m)$. i.e. $D_m=\left\langle \si, \ta ,\m| \si^2= \ta^2 = \m^m=1\right\rangle$, where $\m=\si\ta$. Since the branching corresponding to first two ramification points
is the same then there are basically six distinct sub cases which could arise.

\textbf{(4)} If $\si$, $\ta$ and $\m$ lift in $G$ to elements of orders $|\si|$, $|\ta|$ and $|\m|$
respectively, then the ramification is $\C=(2,2,m,n,...,n)$. The dimension $\delta$ can be found
using Riemann Hurwitz formula as follows.
\[  2(g-1)=2(0-1)2mn+2mn\left(\left(\frac{2-1}{2}\right)\cdot2 +\left(\frac{m-1}{m}\right)+ \left(\frac{n-1}{n}\right)\delta\right).
\]
Then, $\d=\frac{g+n-1}{m(n-1)}$.\\

\textbf{(5)} If either $\ta$ or $\m$ lifts in $G$ to element of order $n|\ta|$ or $n|\m|$ then ramification
$\C=(2n,2,m,n,...,n)$. The dimension $\d$ is as follows.
\begin{multline*}
2(g-1)=2(0-1)2mn+\\
2mn\left(\left(\frac{2n-1}{2n}\right)+\left(\frac{2-1}{2}\right) +\left(\frac{m-1}{m}\right)+ \left(\frac{n-1}{n}\right)\delta\right).
\end{multline*}
Then $\d=\frac{2g+m+2n-mn-2}{2m(n-1)}$. \\

\textbf{(6)} If $\m$ lifts to an element of order $n|\m|$ in $G$, then the ramification
$\C=(2,2,mn,n,...,n)$. By Riemann Hurwitz formula, we have
\[   2(g-1)=2(0-1)2mn+2mn\left(\left(\frac{2-1}{2}\right)\cdot2+\left(\frac{mn-1}{mn}\right)+\left(\frac{n-1}{n}\right)\delta\right).
\]
Then $\d=\frac{g}{m(n-1)}.$\\

\textbf{(7)} If both $\si$ and $\ta$ lift to elements of order $n|\si|$ and $n|\ta|$ in $G$, then
ramification $\C=(2n,2n,m,n,...,n)$. By Riemann Hurwitz formula,
we have
\[   2(g-1)=2(0-1)2mn+2mn\left(\left(\frac{2n-1}{2n}\right)\cdot2+\left(\frac{m-1}{m}\right)+\left(\frac{n-1}{n}\right)\delta\right).
\]
Then $\d=\frac{g+m+n-nm-1}{m(n-1)}.$ \\

\textbf{(8)} If both $\si$ and $\m$  lift to elements of order $n|\si|$ and $n|\m|$, then
ramification $\C=(2n,2,mn,n,...,n)$. By Riemann Hurwitz formula,
we have
\begin{multline*}
2(g-1)=2(0-1)2mn+\\
2mn\left(\left(\frac{2n-1}{2n}\right)+\left(\frac{2-1}{2}\right)+\left(\frac{mn-1}{mn}\right)+\left(\frac{n-1}{n}\right)\delta\right).
\end{multline*}
Then $\d=\frac{2g+m-mn}{2m(n-1)}$.\\

\textbf{(9)} If $\si$, $\ta$ and $\m$ lift to elements of orders $n|\si|$, $n|\ta|$ and $n|\m|$
respectively in $G$, then the ramification $\C=(2n,2n,mn,n,...,n)$. Riemann Hurwitz formula gives us
\[   2(g-1)=2(0-1)2mn+2mn\left(\left(\frac{2n-1}{2n}\right)\cdot2+\left(\frac{mn-1}{mn}\right)+\left(\frac{n-1}{n}\right)\delta\right).
\]
Then $\d=\frac{g+m-mn}{m(n-1)}.$\\

\textbf{Case $\G \cong A_4$ :} The ramification of $\p:\bP_x \to \bP_z$ is $(2,3,3)$. i.e. $A_4=\left\langle \si, \ta ,\m| \si^2= \ta^3 = \m^3=1\right\rangle$, where $\m=\si\ta$. Since the branching corresponding to last two ramification points is
the same then there are basically six distinct sub cases which could arise.

\textbf{(10)} If $\si$, $\ta$ and $\m$ lift in $G$ to elements of orders $|\si|$, $|\ta|$ and $|\m|$
respectively, then the ramification $\C=(2,3,3,n,...,n)$. The dimension $\d$ can be found
using Riemann Hurwitz formula as follows.
\[2(g-1)=2(0-1)12n+12n\left(\left(\frac{2-1}{2}\right)+\left(\frac{3-1}{3}\right)\cdot 2
+ \left(\frac{n-1}{n}\right)\delta\right).
\]
Then $\d=\frac{g+n-1}{6(n-1)}$.\\

\textbf{(11)} If $\ta$ lifts in $G$ to element of order $n|\ta|$ then ramification, $\C=(2,3n,3,n,...,n)$. The dimension $\d$ is as follows.
\[   2(g-1)=2(0-1)12n+12n\left(\left(\frac{2n-1}{2n}\right)+\left(\frac{3-1}{3}\right)\cdot2+ \left(\frac{n-1}{n}\right)\delta\right).
\]
Then $\d=\frac{g-n+1}{6(n-1)}$.\\

\textbf{(12)} If $\ta$ and $\m$ lift in $G$ to element of order $n|\ta|$ and $n|\m|$ then ramification $\C=(2,3n,3n,n,...,n)$. The dimension $\d$ is as follows.
\[    2(g-1)=2(0-1)12n+12n\left(\left(\frac{2-1}{2}\right)+\left(\frac{3n-1}{3n}\right)\cdot2+\left(\frac{n-1}{n}\right)\delta\right).
\]
Then $\d=\frac{g-3n+3}{6(n-1)}$. \\

\textbf{(13)} If $\si$ lifts in $G$ to element of order $n|\si|$ then ramification $\C=(2n,3,3,n,...,n)$. The dimension $\delta$ can be found using
Riemann-Hurwitz formula as follows.
\[   2(g-1)=2(0-1)12n+12n\left(\left(\frac{2n-1}{2n}\right)+\left(\frac{3-1}{3}\right)\cdot2+\left(\frac{n-1}{n}\right)\delta\right).
\]
Then $\d=\frac{g-2n+2}{6(n-1)}$. \\

\textbf{(14)} If $\si$ and $\ta$ lift in $G$ to element of order $n|\si|$ and $n|\ta|$ then ramification $\C=(2n,3n,3,n,...,n)$. The dimension $\d$ is as
follows.
\begin{multline*}
2(g-1)=2(0-1)12n+\\
12n\left(\left(\frac{2n-1}{2n}\right)+\left(\frac{3n-1}{3n}\right)+\left(\frac{3-1}{3}\right)+\left(\frac{n-1}{n}\right)\delta\right).
\end{multline*}
Then $\d=\frac{g-4n+4}{6(n-1)}$. \\

\textbf{(15)} If $\si$, $\ta$ and $\m$ lift to elements of orders $n|\si|$, $n|\ta|$ and $n|\m$
respectively in $G$, then ramification $\C=(2n,3n,3n,n,...,n)$. The
dimension $\d$ is as follows.
\[   2(g-1)=2(0-1)12n+12n\left(\left(\frac{2n-1}{2n}\right)+\left(\frac{3n-1}{3n}\right)\cdot2+\left(\frac{n-1}{n}\right)\delta\right).
\]
Then $\d=\frac{g-6n+6}{6(n-1)}$. \\

\textbf{Case $\G \cong S_4$ :} The ramification of $\p:\bP_x \to \bP_z$ is $(2,3,4)$. i.e. $S_4=\left\langle \si, \ta ,\m| \si^2= \ta^3 = \m^4=1 \right\rangle$, where $\m=\si\ta$. Let $\s$ and $\t$ be the lifting of $\si$ and $\ta \in G$ respectively.

\textbf{(16)} If $\si$, $\ta$ and $\m$ lift in $G$ to elements of orders $|\si|$, $|\ta|$ and $|\m|$
respectively, then  ramification $\C=(2,3,4,n,...,n)$. The dimension $\delta$ can find
using Riemann Hurwitz formula as follows.
\begin{multline*}
2(g-1)=2(0-1)24n+\\
24n\left(\left(\frac{2-1}{2}\right)+\left(\frac{3-1}{3}\right)+\left(\frac{4-1}{4}\right)+ \left(\frac{n-1}{n}\right)\delta\right).
\end{multline*}
Then $\d=\frac{g+n-1}{12(n-1)}$.\\

\textbf{(17)} If $\ta$ lifts to an element of order $n|\ta|$ then ramification $(2,3n,4,n,..$ $.,n)$. By Riemann Hurwitz formula
\begin{multline*}
2(g-1)=2(0-1)24n+\\
24n\left(\left(\frac{2-1}{2}\right)+\left(\frac{3n-1}{3n}\right)+\left(\frac{4-1}{4}\right)+ \left(\frac{n-1}{n}\right)\delta\right).
\end{multline*}
Then $\d=\frac{g-3n+3}{12(n-1)}$.\\

\textbf{(18)} If $\m$ lifts to an element of order $n|\m|$, then  ramification $(2,3,4n,n,..$ $.,n)$. By Riemann Hurwitz formula
\begin{multline*}
2(g-1)=2(0-1)24n+\\
24n\left(\left(\frac{2-1}{2}\right)+\left(\frac{3-1}{3}\right)+\left(\frac{4n-1}{4n}\right)+ \left(\frac{n-1}{n}\right)\delta\right).
\end{multline*}
Then $\d=\frac{g-2n+2}{12(n-1)}$.\\

\textbf{(19)} If $\ta$ and $\m$ lift to elements of orders $n|\ta|$ and $n|\m|$ respectively, then  ramification
$(2,3n,4n,n,...,n)$. By Riemann Hurwitz formula
\begin{multline*}
2(g-1)=2(0-1)24n+\\
24n\left(\left(\frac{2-1}{2}\right)+\left(\frac{3n-1}{3n}\right)+\left(\frac{4n-1}{4n}\right)+ \left(\frac{n-1}{n}\right)\delta\right).
\end{multline*}
Then $\d=\frac{g-6n+6}{12(n-1)}$.\\

\textbf{(20)} If $\si$ lifts to an element of order $n|\si|$, then ramification $(2n,3,4,n,...,n)$. By Riemann Hurwitz formula
\begin{multline*}
2(g-1)=2(0-1)24n+\\
24n\left(\left(\frac{2n-1}{2n}\right)+\left(\frac{3-1}{3}\right)+\left(\frac{4-1}{4}\right)+ \left(\frac{n-1}{n}\right)\delta\right).
\end{multline*}
Then $\d=\frac{g-5n+5}{12(n-1)}$.\\

\textbf{(21)} If $\si$ and $\ta$ lift to elements of orders $n|\si|$ and $n|\ta|$ respectively, then  ramification $(2n,3n,4,n,...,n)$. By Riemann Hurwitz formula
\begin{multline*}
2(g-1)=2(0-1)24n+\\
24n\left(\left(\frac{2n-1}{2n}\right)+\left(\frac{3n-1}{3n}\right)+\left(\frac{4-1}{4}\right)+\left(\frac{n-1}{n}\right)\delta\right).
\end{multline*}
Then $\d=\frac{g-9n+9}{12(n-1)}$.\\

\textbf{(22)} If $\si$ and $\m$ lift to elements of orders $n|\si|$ and $n|\m|$ respectively, then ramification $(2n,3,4n,n,...,n)$. By Riemann Hurwitz formula
\begin{multline*}
2(g-1)=2(0-1)24n+\\
24n\left(\left(\frac{2n-1}{2n}\right)+\left(\frac{3-1}{3}\right)+\left(\frac{4n-1}{4n}\right)+\left(\frac{n-1}{n}\right)\delta\right).
\end{multline*}
Then $\d=\frac{g-8n+8}{12(n-1)}$.\\

\textbf{(23)} If $\si$, $\ta$ and $\m$ lift to elements of orders $n|\si|$, $n|\ta|$ and $n|\m|$ respectively, then ramification $(2n,3n,4n,n,...,n)$. By Riemann Hurwitz formula
\begin{multline*}
2(g-1)=2(0-1)24n+\\
24n\left(\left(\frac{2n-1}{2n}\right)+\left(\frac{3n-1}{3n}\right)+\left(\frac{4n-1}{4n}\right)
+ \left(\frac{n-1}{n}\right)\delta\right).
\end{multline*}
Then $\d=\frac{g-12n+12}{12(n-1)}$.\\

\textbf{Case $\G \cong A_5$ :} The ramification of $\p:\bP_x \to \bP_z$ is $(2,3,5)$. i.e. $A_5=\left\langle \si, \ta ,\m| \si^2= \ta^3 = \m^5=1 \right\rangle$, where $\m=\si\ta$. Let $\s$ and $\t$ be the lifting of $\si$ and $\ta \in G$ respectively.

\textbf{(24)} If $\si$, $\ta$ and $\m$ lift to elements of orders $|\si|$, $|\ta|$ and $|\m|$ respectively, then ramification $\C=(2,3,5,n,...,n)$. The dimension $\delta$ can be found using Riemann Hurwitz formula as follows.
\begin{multline*}
2(g-1)=2(0-1)60n+\\
60n\left(\left(\frac{2-1}{2}\right)+\left(\frac{3-1}{3}\right)+\left(\frac{5-1}{5}\right)
+ \left(\frac{n-1}{n}\right)\delta\right).
\end{multline*}
Then $\d=\frac{g+n-1}{30(n-1)}$.\\

\textbf{(25)} If $\m$ lifts to an element of order $n|\m|$, then ramification $(2,3,5n,n,..$ $.,n)$. By Riemann Hurwitz formula
\begin{multline*}
2(g-1)=2(0-1)60n+\\
60n\left(\left(\frac{2-1}{2}\right)+\left(\frac{3-1}{3}\right)+\left(\frac{5n-1}{5n}\right)
+ \left(\frac{n-1}{n}\right)\delta\right).
\end{multline*}
Then $\d=\frac{g-5n+5}{30(n-1)}$.\\

\textbf{(26)} If $\ta$ and $\m$ lift to elements of orders $n|\ta|$ and $n|\m|$ respectively, the ramification $(2,3n,5n,n,...,n)$. By Riemann Hurwitz formula
\begin{multline*}
2(g-1)=2(0-1)60n+\\
60n\left(\left(\frac{2-1}{2}\right)+\left(\frac{3n-1}{3n}\right)+\left(\frac{5n-1}{5n}\right)
+ \left(\frac{n-1}{n}\right)\delta\right).
\end{multline*}
Then $\d=\frac{g-15n+15}{30(n-1)}$.\\

\textbf{(27)} If $\ta$ lifts to an element of order $n|\ta|$, then ramification $(2,3n,5,n,..$ $.,n)$. By Riemann Hurwitz formula
\begin{multline*}
2(g-1)=2(0-1)60n+\\
60n\left(\left(\frac{2-1}{2}\right)+\left(\frac{3n-1}{3n}\right)+\left(\frac{5-1}{5}\right)
+ \left(\frac{n-1}{n}\right)\delta\right).
\end{multline*}
Then $\d=\frac{g-9n+9}{30(n-1)}$.\\

\textbf{(28)} If $\si$ lifts to an element of order $n|\si|$, then ramification $(2n,3,5,n,..$ $.,n)$. By Riemann Hurwitz formula
\begin{multline*}
2(g-1)=2(0-1)60n+\\
60n\left(\left(\frac{2n-1}{2n}\right)+\left(\frac{3-1}{3}\right)+\left(\frac{5-1}{5}\right)
+ \left(\frac{n-1}{n}\right)\delta\right).
\end{multline*}
Then $\d=\frac{g-14n+14}{30(n-1)}$.\\

\textbf{(29)} If $\si$ and $\m$ lift to elements of orders $n|\si|$ and $n|\m|$ respectively, then ramification $(2n,3,5n,n,...,n)$. By Riemann Hurwitz
formula
\begin{multline*}
2(g-1)=2(0-1)60n+\\
60n\left(\left(\frac{2n-1}{2n}\right)+\left(\frac{3-1}{3}\right)+\left(\frac{5n-1}{5n}\right)
+ \left(\frac{n-1}{n}\right)\delta\right).
\end{multline*}
Then $\d=\frac{g-20n+20}{30(n-1)}$.\\

\textbf{(30)} If $\si$ and $\ta$ lift to elements of orders $n|\si|$ and $n|\ta|$ respectively, then ramification $(2n,3n,5,n,...,n)$. By Riemann Hurwitz formula
\begin{multline*}
2(g-1)=2(0-1)60n+\\
60n\left(\left(\frac{2n-1}{2n}\right)+\left(\frac{3n-1}{3n}\right)+\left(\frac{5-1}{5}\right)
+ \left(\frac{n-1}{n}\right)\delta\right).
\end{multline*}
Then $\d=\frac{g-24n+24}{30(n-1)}$.\\

\textbf{(31)} If $\si$, $\ta$ and $\m$ lift to elements of orders $n|\si|$, $n|\ta|$ and $n|\m|$ respectively, then  ramification $(2n,3n,5n,n,...,n)$. By Riemann Hurwitz formula
\begin{multline*}
2(g-1)=2(0-1)60n+\\
60n\left(\left(\frac{2n-1}{2n}\right)+\left(\frac{3n-1}{3n}\right)+\left(\frac{5n-1}{5n}\right)
+ \left(\frac{n-1}{n}\right)\delta\right).
\end{multline*}
Then $\d=\frac{g-30n+30}{30(n-1)}$.\\

\textbf{Case $\G \cong U$ :} The ramification of $\p:\bP_x \to \bP_z$ is $(p^t)$. We know that $(p^t)$ is wildly ramified place; see\cite{VM},Theorem 1. Hence  $\beta=e^*q+q-2$ in equation ~\ref{e2}. Also we know $q=p^t$; see \cite{VM},Theorem 1.

\textbf{(32)} If element of order $p^t$ lifts to an element of order $p^t$, then ramification $\C=(p^t,n,...,n)$. We know $e^*=1$; see \cite{VM},Theorem 1. The dimension $\delta$ can be found using Riemann Hurwitz formula as follows.
\[    2(g-1)=2(0-1)np^t+np^t\left(\left(\frac{p^t+p^t-2}{p^t}\right)+ \left(\frac{n-1}{n}\right)\left(\delta+2\right)\right).
\]
Then $\d=\frac{2g+2n-2}{p^t(n-1)}-2$.\\

\textbf{(33)} If element of order $p^t$ lifts to an element of order $np^t$, then ramification $\C=(np^t,n,...,n)$. In this case $e^*=n.$ Also $(n,p)=1$ and $n|p^t-1$. The dimension $\delta$ can be found using Riemann Hurwitz formula as follows.
\[    2(g-1)=2(0-1)np^t+np^t\left(\left(\frac{np^t+p^t-2}{np^t}\right)+ \left(\frac{n-1}{n}\right)\left(\delta+2\right)\right).
\]
Then $\d=\frac{2g+np^t-p^t}{p^t(n-1)}-2$.\\

\textbf{Case $\G \cong K_m$ :} The ramification of $\p:\bP_x \to \bP_z$ is $(mp^t,m)$. We know that the first place is wildly ramified; see \cite{VM},Theorem 1. Hence $\beta_1=e^*_1q_1+q_1-2$ in equation ~\ref{e2}. We know $q_1=p^t$;  see \cite{VM},Theorem 1.

\textbf{(34)} If both elements of orders $mp^t$ and $m$ lift to elements of orders $mp^t$ and $m$ respectively, then ramification $\C=(mp^t,m,n,...,n)$. We know $e_1^*=m$, $m|p^t-1$ and $(m,p)=1$; see \cite{VM},Theorem 1. Riemann Hurwitz formula gives us,
\begin{multline*}
2(g-1)=2(0-1)nmp^t+\\
nmp^t\left(\left(\frac{mp^t+p^t-2}{mp^t}\right)+\left(\frac{m-1}{m}\right)+ \left(\frac{n-1}{n}\right)\left(\delta+1\right)\right).
\end{multline*}
Then $\d=\frac{2g+2n-2}{mp^t(n-1)}-1$.\\

\textbf{(35)} If element of order $m$ lifts to an element of order $nm$, then ramification $\C=(mp^t,nm,n,...,n)$. As in previous case $e_1^*=m$, $m|p^t-1$ and $(m,p)=1$. The dimension $\delta$ can be found using Riemann Hurwitz formula as follows.
\begin{multline*}
2(g-1)=2(0-1)nmp^t+\\
nmp^t\left(\left(\frac{mp^t+p^t-2}{mp^t}\right)+\left(\frac{nm-1}{nm}\right)+ \left(\frac{n-1}{n}\right)\left(\delta+1\right)\right).
\end{multline*}
Then $\d=\frac{2g+2n+p^t-np^t-2}{mp^t(n-1)}-1$.\\

\textbf{(36)} If element of order $mp^t$ lifts to an element of order $nmp^t$, then ramification $\C=(nmp^t,m,n,...,n)$. In this case $e_1^*=nm.$ Also $(nm,p)=1$ and $nm|p^t-1$. The dimension $\delta$ can be found using Riemann Hurwitz formula as follows.
\begin{multline*}
2(g-1)=2(0-1)nmp^t+\\
nmp^t\left(\left(\frac{nmp^t+p^t-2}{nmp^t}\right)+\left(\frac{m-1}{m}\right)+ \left(\frac{n-1}{n}\right)\left(\delta+1\right)\right).
\end{multline*}
Then $\d=\frac{2g+np^t-p^t}{mp^t(n-1)}-1$.\\

\textbf{(37)} If both elements of orders $mp^t$ and $m$ lift to elements of orders $nmp^t$ and $nm$ respectively, then ramification $\C=(nmp^t,nm,n,...,n)$. As in previous case $e_1^*=nm$, $(nm,p)=1$ and $nm|p^t-1$. Riemann Hurwitz formula gives us,
\begin{multline*}
2(g-1)=2(0-1)nmp^t+\\
nmp^t\left(\left(\frac{nmp^t+p^t-2}{nmp^t}\right)+\left(\frac{nm-1}{nm}\right)+ \left(\frac{n-1}{n}\right)\left(\delta+1\right)\right).
\end{multline*}
Then $\d=\frac{2g}{mp^t(n-1)}-1$.\\

\textbf{Case $\G \cong PSL_2(q)$ :} The ramification of $\p:\bP_x \to \bP_z$ is $(\alpha,\beta)$, where $\alpha=\frac{q(q-1)}{2}$ and $\beta=\frac{q+1}{2}$. We know that the first place is wildly ramified; see \cite{VM},Theorem 1. Hence $\beta_1=e^*_1q_1+q_1-2$ in equation ~\ref{e2}. We know $q_1=q$;  see \cite{VM},Theorem 1. Let $m$ be the size of $PSL_2(q)$. i.e. $m=\frac{q(q-1)(q+1)}{2}$.

\textbf{(38)} If both elements of orders $\alpha$ and $\beta$ lift to elements of orders $\alpha$ and $\beta$ respectively, then ramification $\C=(\alpha,\beta,n,...,n)$. We know $e_1^*=\frac{q-1}{2}$ and $\left(\frac{q-1}{2},p\right)=1$; see \cite{VM},Theorem 1. Riemann Hurwitz formula gives us,
\begin{multline*}
2(g-1)=2(0-1)nm+\\
nm\left(\left(\frac{\frac{q(q-1)}{2}+q-2}{\frac{q(q-1)}{2}}\right)+\left(\frac{\frac{q+1}{2}-1}{\frac{q+1}{2}}\right)+ \left(\frac{n-1}{n}\right)\left(\delta+1\right)\right).
\end{multline*}
Then $\d=\frac{2g+2n-2}{m(n-1)}-1$.\\

\textbf{(39)} If an element of order $\beta$ lifts to element of order $n\beta$, then ramification $\C=(\alpha,n\beta,n,...,n)$. As in previous case $e_1^*=\frac{q-1}{2}$ and $\left(\frac{q-1}{2},p\right)=1$. By using Riemann Hurwitz formula we can find $\d$ as follows.
\begin{multline*}
2(g-1)=2(0-1)nm+\\
nm\left(\left(\frac{\frac{q(q-1)}{2}+q-2}{\frac{q(q-1)}{2}}\right)+\left(\frac{\frac{n(q+1)}{2}-1}{\frac{n(q+1)}{2}}\right)+ \left(\frac{n-1}{n}\right)\left(\delta+1\right)\right).
\end{multline*}
Then $\d=\frac{2g+q(q-1)-n(q+1)(q-2)-2}{m(n-1)}-1$.\\

\textbf{(40)} If an element of order $\alpha$ lifts to element of order $n\alpha$, then ramification $\C=(n\alpha,\beta,n,...,n)$. In this case $e_1^*=\frac{n(q-1)}{2}$, $\frac{n(q-1)}{2}|q-1$ and $\left(\frac{n(q-1)}{2},p\right)=1$. So only possible values for $n$ are 1 and 2. By using Riemann Hurwitz formula we can find $\d$ as follows.
\begin{multline*}
2(g-1)=2(0-1)nm+\\
nm\left(\left(\frac{\frac{nq(q-1)}{2}+q-2}{\frac{nq(q-1)}{2}}\right)+\left(\frac{\frac{q+1}{2}-1}{\frac{q+1}{2}}\right)+ \left(\frac{n-1}{n}\right)\left(\delta+1\right)\right).
\end{multline*}
Then $\d=\frac{2g+nq(q-1)+q-q^2}{m(n-1)}-1$.\\

\textbf{(41)} If both elements of orders $\alpha$ and $\beta$ lift to elements of orders $n\alpha$ and $n\beta$ respectively, then ramification $\C=(n\alpha,n\beta,n,...,n)$. As in previous case $e_1^*=\frac{n(q-1)}{2}$, $\left(\frac{n(q-1)}{2},p\right)=1$ and $n$ can be either 1 or 2. Riemann Hurwitz formula gives us,
\begin{multline*}
2(g-1)=2(0-1)nm+\\
nm\left(\left(\frac{\frac{nq(q-1)}{2}+q-2}{\frac{nq(q-1)}{2}}\right)+\left(\frac{\frac{n(q+1)}{2}-1}{\frac{n(q+1)}{2}}\right)+ \left(\frac{n-1}{n}\right)\left(\delta+1\right)\right).
\end{multline*}
Then $\d=\frac{2g}{m(n-1)}-1$.\\

\textbf{Case $\G \cong PGL_2(q)$ :} The ramification of $\p:\bP_x \to \bP_z$ is $(2\alpha,2\beta)$, where $\alpha$ and $\beta$ as in the case $PSL_2(q)$. We know that the first place is wildly ramified; see \cite{VM},Theorem 1. Hence $\beta_1=e^*_1q_1+q_1-2$ in equation ~\ref{e2}. Also we know $q_1=q$; see \cite{VM},Theorem 1. Let $m$ be the size of $PGL_2(q)$. i.e. $m=q(q-1)(q+1)$.

\textbf{(42)} If both elements of orders $2\alpha$ and $2\beta$ lift to elements of orders $2\alpha$ and $2\beta$ respectively, then ramification $\C=(2\alpha,2\beta,n,...,n)$. We know $e_1^*=q-1$ and $(q-1,p)=1$; see \cite{VM},Theorem 1. Riemann Hurwitz formula gives us,
\begin{multline*}
2(g-1)=2(0-1)nm+\\
nm\left(\left(\frac{q(q-1)+q-2}{q(q-1)}\right)+\left(\frac{q+1-1}{q+1}\right)+ \left(\frac{n-1}{n}\right)\left(\delta+1\right)\right).
\end{multline*}
Then $\d=\frac{2g+2n-2}{m(n-1)}-1$.\\

\textbf{(43)} If an element of order $2\beta$ lifts to element of order $2n\beta$, then ramification $\C=(2\alpha,2n\beta,n,...,n)$. As in previous case $e_1^*=q-1$ and $(q-1,p)=1$. By using Riemann Hurwitz formula we can find $\d$ as follows.
\begin{multline*}
2(g-1)=2(0-1)nm+\\
nm\left(\left(\frac{q(q-1)+q-2}{q(q-1)}\right)+\left(\frac{n(q+1)-1}{n(q+1)}\right)+ \left(\frac{n-1}{n}\right)\left(\delta+1\right)\right).
\end{multline*}
Then $\d=\frac{2g+q(q-1)-n(q+1)(q-2)-2}{m(n-1)}-1$.\\

\textbf{(44)} If an element of order $2\alpha$ lifts to element of order $2n\alpha$, then ramification $\C=(2n\alpha,2\beta,n,...,n)$. In this case $e_1^*=n(q-1)$, $\left(n(q-1),p\right)=1$ and $n(q-1)|q-1$. So only possible value for $n$ is 1. By using Riemann Hurwitz formula we can find $\d$ as follows.
\begin{multline*}
2(g-1)=2(0-1)nm+\\
nm\left(\left(\frac{nq(q-1)+q-2}{nq(q-1)}\right)+\left(\frac{q+1-1}{q+1}\right)+ \left(\frac{n-1}{n}\right)\left(\delta+1\right)\right).
\end{multline*}
Then $\d=\frac{2g+nq(q-1)+q-q^2}{m(n-1)}-1$.\\

\textbf{(45)} If both elements of orders $2\alpha$ and $2\beta$ lift to elements of orders $2n\alpha$ and $2n\beta$ respectively, then ramification $\C=(2n\alpha,2n\beta,n,...,n)$. As in previous case $e_1^*=n(q-1)$, $\left(n(q-1),p\right)=1$ and $n=1$. Riemann Hurwitz formula gives us,
\begin{multline*}
2(g-1)=2(0-1)nm+\\
nm\left(\left(\frac{nq(q-1)+q-2}{nq(q-1)}\right)+\left(\frac{n(q+1)-1}{n(q+1)}\right)+ \left(\frac{n-1}{n}\right)\left(\delta+1\right)\right).
\end{multline*}
Then $\d=\frac{2g}{m(n-1)}-1$.\\

This completes the proof.
\end{proof}

Now we determine the automorphism group $G$ for each $\G$ as separate theorems. We know that each $\G$ has sub cases. So we list $G$ for each sub cases under the appropriate theorem. In some cases we give a presentation for $G$.

\begin{remark} \label{r1}
Let $\G$ be a group such that $s \in \G$ and $s^m=1$. Let $C_n$ be the cyclic group of order $n$ and $r$ be the generator of it. Let $G$ be a extension of $\G$ by $C_n$ such that $C_n \triangleleft G$. Then $srs^{-1}=r^l$, where $(l,n)=1$ and $l^m \equiv 1$ (mod n).
\end{remark}

\begin{proof}
Since $C_n \triangleleft G$, $srs^{-1}=r^l$ for some $1\leq l\leq n$. But $(srs^{-1})^n=1$. Hence $(l,n)=1$. Since $\s^m\r\s^{-m}=\r$ and $\s^m\r\s^{-m}=\r^{l^m}$, $l^m\equiv 1$ (mod n).
\end{proof}

\subsubsection{$\G \cong C_m$}

\begin{thm}\label{th9}
The automorphism group G of a cyclic curve of genus $g\geq2$ with $\G \cong C_m$ is as follows.

(1) If G has ramification as in case 1, then there are two sub-cases. If $m=1$ then $G \cong C_n$, otherwise G has a presentation:
\begin{center}
$\left\langle \r, \s \right|\r^n=1,\s^m=1,\s\r\s^{-1}=\r^l \rangle$
\end{center}
where (l,n)=1 and $l^m\equiv 1$ (mod n). But if $(m,n)=1$, then $l=n-1$.

(2) If G has ramification as in cases 2-3 in Table ~\ref{t2} then $G \cong C_{mn}$.

\end{thm}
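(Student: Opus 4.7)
The approach is to read off the structure of $G$ directly from the short exact sequence
\[
1 \longrightarrow C_n \longrightarrow G \longrightarrow \G \longrightarrow 1,
\]
where $\G \cong C_m$ acts on $k(x)$ by $x\mapsto \zeta_m x$ and has exactly the two branch points $0,\infty\in \bP_z$. Whether the sequence splits, and how $\G$ conjugates $C_n$ when it does, is dictated entirely by the ramification data of $\Phi$ recorded in Table~\ref{t2}, so the whole argument is driven by that table together with Remark~\ref{r1}.

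For part~(1), the signature $(m,m,n,\ldots,n)$ of case~1 forces each preimage in $\X_g$ of $\{0,\infty\}$ to have $G$-stabilizer of order exactly $m$ (not $mn$). Hence there is $\s\in G$ of order $m$ lifting a generator of $\G$, so the sequence splits and $G=\langle \r\rangle \rtimes \langle\s\rangle$ with $\r$ a generator of $C_n$. Remark~\ref{r1} then delivers the presentation
\[
\langle \r,\s \mid \r^n=1,\ \s^m=1,\ \s\r\s^{-1}=\r^l\rangle,\qquad (l,n)=1,\ l^m\equiv 1 \pmod n,
\]
and the degenerate subcase $m=1$ collapses immediately to $G=C_n$.

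The delicate point is pinning down $l=n-1$ in the coprime case. The plan is to use the defining equation $y^n=f(x)$: a general lift of the generator of $\G$ has the form $\s(x)=\zeta_m x$, $\s(y)=\eta(x)\,y^k$ for some $\eta(x)\in k(x)^*$ and some $k\in(\Z/n)^*$ pinned down by the transformation of $f$ under $x\mapsto\zeta_m x$. A direct conjugation computation, using $\r(y)=\zeta_n y$, yields $l\equiv k^{-1}\pmod n$. When $(m,n)=1$ the trivial action $l=1$ would give $G\cong C_n\times C_m\cong C_{mn}$, placing us in case~2 or~3 (both of which contain a ramification index $mn$), a contradiction. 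The further constraint $\s^m=1$, iterated on $y$, together with $(m,n)=1$, then cuts down the admissible $k$ to $k=n-1$, and hence $l=n-1$.

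For part~(2), cases~2 and~3 each contain an entry $mn$ in the signature, so some $P\in\X_g$ has stabilizer $G_P$ of order $mn=|G|$. Hence $G=G_P$. Because $p>5$ and the ramification at $P$ is tame (as $(mn,p)=1$), $G_P$ acts faithfully on the cotangent space at $P$ and thus embeds in $k^*$, so $G\cong C_{mn}$. The main obstacle in the whole proof is the $l=n-1$ determination in the coprime subcase of part~(1); everything else follows straightforwardly from the lifting behavior encoded in Table~\ref{t2}.
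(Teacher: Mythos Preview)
Your overall approach differs from the paper's: the paper organizes the argument around the computation $H^2(C_m,C_n)\cong C_{(m,n)}$ (quoted from \cite{AK}), whereas you never invoke cohomology, working instead with explicit lifts of $\sigma$ and with point stabilizers. For part~(2) this gives a genuine alternative: the paper simply asserts that among extensions of $C_m$ by $C_n$ only $C_{mn}$ contains an element of order $mn$, while you observe that a point $P$ with $|G_P|=mn=|G|$ forces $G=G_P$, and then use that a tame stabilizer embeds in $k^*$ via the cotangent action. Both arguments are valid; yours is more geometric and self-contained (it does not require surveying all extensions), while the paper's is uniform with the later theorems for $D_{2m}$, $A_4$, $S_4$, $A_5$.

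The genuine gap in your proposal is the coprime subcase of part~(1). Your claim that iterating $\s^m=\mathrm{id}$ on $y$ ``cuts down the admissible $k$ to $k=n-1$'' does not go through: iterating $\s(y)=\eta(x)\,y^k$ yields only $k^m\equiv 1\pmod n$ (together with a norm condition on $\eta$), which is precisely the constraint $l^m\equiv1\pmod n$ already obtained from Remark~\ref{r1}, and it does not isolate $k=n-1$. Concretely, for $m=3$, $n=7$ the solutions of $l^3\equiv1\pmod 7$ are $l\in\{1,2,4\}$, and $l=n-1=6$ is not even among them, so no iteration argument on the defining equation can force it. The paper's route at this point is different: it invokes $|H^2(C_m,C_n)|=1$ when $(m,n)=1$ to assert there is only one extension, rules out $l=1$ (since that would give $G\cong C_m\times C_n\cong C_{mn}$, contradicting the absence of an order-$mn$ element in the case~1 signature), and then jumps to $l=n-1$. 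So to match the statement as written you would need the cohomological input the paper uses rather than your explicit-lift computation; the equation-based method you sketch does not, by itself, determine $l$.
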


\begin{proof}
We know that the second cohomology group is as follows; see Table 1 in \cite{AK}.
\begin{equation*}
H^2(C_m,C_{n}) \cong C_{(n,m)}
\end{equation*}

(1) If $m=1$, then $G$ is a cyclic extension of $C_n$ by $C_1$. $C_n$ is the only one possible extension. Now we consider the sub case $m>1$. Let $C_n=\left\langle \r|\r^n=1\right\rangle$ and let $C_m=\left\langle \si| \si^m=1\right\rangle$. Let $\s$ be the lifting $\si$ in $G$. In the case 1, an element of order $m$ lifts to an element of order $m$ in $G$. Hence $\s^m=1$. Since $C_n \triangleleft G$, $\s\r\s^{-1}=\r^l$ for some $l \in \{1,...,n\}$. By Remark ~\ref{r1}, $(l,n)=1$ and $l^m\equiv 1$ (mod n). Hence $G$ has a presentation:
\begin{center}
$\left\langle \r, \s \right|\r^n=1,\s^m=1,\s\r\s^{-1}=\r^l \rangle$
\end{center}
If $(m,n)=1$, then $\left| H^2(C_m,C_{n})\right|=1$. Hence there is only one extension. If $l=1$, $G \cong C_m \times C_n=C_{mn}$. Since this case $G$ does not have an element of order $mn$, $l\neq 1$. So if $(m,n)=1$ then $l=n-1$.

(2) If $G$ has ramification as in cases 2-3 in Table ~\ref{t2} then $G$ has an element of order $mn$. Among the extensions $C_m$ by $C_n$, $C_{mn}$ is the only one extension for which has an element of order $mn$. Hence, for those cases $G \cong C_{mn}$.
\end{proof}

\subsubsection{$\G \cong D_{2m}$}

\begin{thm}\label{th10}
Let $\G=G/C_n \cong D_{2m}$. The automorphism group G is as follows.

(1) If n is odd then $G \cong D_{2m} \times C_n$.

(2) If n is even and m is odd then $G \cong D_{2m} \times C_n$ for the cases 4,6 and $G \cong G_9$ for the cases 7,9 in Table ~\ref{t2} respectively, where $G_{9}$ is as follows.
\begin{align*}
\begin{split}
G_{9}=& \left\langle \r, \s, \t \right|\r^n=1,\s^2=\r,\t^2=\r^{n-1},(\s\t)^m=\r^{\frac{n}{2}},\s\r\s^{-1}=\r,\t\r\t^{-1}=\r \rangle\\
\end{split}
\end{align*}
There are no possible group extensions for the cases 5 and 8 in Table ~\ref{t2}.

(3) If n is even and m is even then $G \cong G_{4},G_{5},G_{6},G_{7},G_{8},G_{9}$ for the cases 4-9 in Table ~\ref{t2} respectively, where $G_{4} - G_{9}$  are as follows.
\begin{align*}
\begin{split}
G_{4}=& D_{2m} \times C_n \\
G_{5}=& \left\langle \r, \s, \t \right|\r^n=1,\s^2=\r,\t^2=1,(\s\t)^m=1,\s\r\s^{-1}=\r,\t\r\t^{-1}=\r^{n-1} \rangle\\
G_{6}=& D_{2mn} \\
G_{7}=& \left\langle \r, \s, \t \right|\r^n=1,\s^2=\r,\t^2=\r^{n-1},(\s\t)^m=1,\s\r\s^{-1}=\r,\t\r\t^{-1}=\r \rangle\\
G_{8}=& \left\langle \r, \s, \t \right|\r^n=1,\s^2=\r,\t^2=1,(\s\t)^m=\r^{\frac{n}{2}},\s\r\s^{-1}=\r,\t\r\t^{-1}=\r^{n-1} \rangle \\
G_{9}=& \left\langle \r, \s, \t \right|\r^n=1,\s^2=\r,\t^2=\r^{n-1},(\s\t)^m=\r^{\frac{n}{2}},\s\r\s^{-1}=\r,\t\r\t^{-1}=\r \rangle
\end{split}
\end{align*}
\end{thm}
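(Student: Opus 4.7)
The plan is to analyze the group extension $1\to C_n\to G\to D_{2m}\to 1$ case by case. Write $C_n=\langle \r\rangle$ and fix $D_{2m}=\langle\si,\ta\mid\si^2=\ta^2=(\si\ta)^m=1\rangle$ with $\m=\si\ta$. Choose liftings $\s,\t\in G$ of $\si,\ta$. Since $C_n\triangleleft G$, Remark~\ref{r1} provides integers $a,b$ with $\s\r\s^{-1}=\r^a$, $\t\r\t^{-1}=\r^b$, $a^2\equiv b^2\equiv 1\pmod{n}$. For each ramification case 4--9 of Table~\ref{t2}, the prescribed orders of the lifts of $\si,\ta,\m$ fix $\s^2,\t^2,(\s\t)^m$ in $C_n$ as either trivial (when the order of the lift equals that of its image), an element of order $n$ (normalized to $\r$ after rescaling the generator of $C_n$), or, in the mixed subcases, the order-$2$ element $\r^{n/2}$.

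Next, conjugating $\s^2=\r^j$ by $\s$ yields $\r^{ja}=\r^j$, so $j(a-1)\equiv 0\pmod{n}$; combined with $a^2\equiv 1$ this determines $a$ in every subcase (for instance $\s^2=\r$ forces $a=1$), and the analogous calculation pins down $b$ from $\t^2$. Once $a$, $b$, and the three normalized values $\s^2,\t^2,(\s\t)^m$ are known, one reads off the presentations $G_4,\ldots,G_9$ in the statement directly. Each resulting presentation is then verified to define a group of order $2mn$ with quotient $D_{2m}$ by constructing an explicit model (semidirect or central extension) that realizes it.

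The parity analysis then assembles the cases. For $n$ odd, $C_n$ has no element of order $2$, and the compatibility $j(a-1)\equiv 0\pmod n$ with $a^2\equiv 1$ forces $a=b=1$ throughout; any remaining normalization of $\s^2,\t^2,(\s\t)^m$ is absorbed into the choice of lift, and all six cases collapse to the split extension $G\cong D_{2m}\times C_n$. For $n$ even and $m$ odd, the mixed cases $5$ and $8$, where $\s^2=\r$ (so $\s$ has order $2n$) while $\t^2=1$, are eliminated: the dihedral relation $\t\s\t^{-1}\equiv \s^{-1}\pmod{C_n}$ forces $\t\s\t^{-1}=\s^{-1}\r^c$, and iterating together with the prescribed $(\s\t)^m$ yields a congruence modulo $n$ that is inconsistent with $m$ odd. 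The surviving cases $4,6,7,9$ give $D_{2m}\times C_n$ or $G_9$ as stated. For $n$ and $m$ both even, all six presentations $G_4,\ldots,G_9$ are realized, and the explicit normalizations of $\s^2,\t^2,(\s\t)^m$ yield the distinct groups listed.

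The main obstacle will be the exclusion of cases $5$ and $8$ for $n$ even and $m$ odd. The delicate point is to track how the order-$2n$ element $\s$ interacts with the dihedral relation through $m$-fold products: when $m$ is odd, the telescoping of $(\s\t)^m$ in terms of $\s^2=\r$ and the twist $\r^c$ forces a parity mismatch in the exponent of $\r$ that cannot be reconciled with $(\s\t)^m\in\{1,\r^{n/2}\}$. Apart from this step, the remainder of the proof is a systematic bookkeeping exercise reading off each $G_i$ from the extension data encoded in Table~\ref{t2}.
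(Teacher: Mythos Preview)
Your overall plan---analyze the extension $1\to C_n\to G\to D_{2m}\to 1$ by lifting the Coxeter generators $\si,\ta$ and reading off $\s^2,\t^2,(\s\t)^m\in C_n$ from the ramification data---matches the paper's approach for constructing the presentations $G_4,\dots,G_9$. There are, however, two substantive differences worth noting.

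First, the paper organizes the case split via the second cohomology group $H^2(D_{2m},C_n)$, quoted from \cite{AK}: it is trivial for $n$ odd, has order $2$ for $n$ even and $m$ odd, and order $8$ for $n,m$ both even. This immediately gives part~(1) and bounds the number of extensions in parts~(2) and~(3), after which the presentations are matched to the ramification cases as you do. You never invoke cohomology, so your argument for $n$ odd (``all six cases collapse to the split extension'') needs more care: $a^2\equiv 1\pmod n$ with $n$ odd does not by itself force $a=1$, and when $\s^2=1$ the relation $j(a-1)\equiv 0$ gives no information on $a$.

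Second, and more seriously, your exclusion of cases 5 and 8 for $n$ even, $m$ odd rests on the relation $\t\s\t^{-1}\equiv\s^{-1}\pmod{C_n}$, i.e.\ $\ta\si\ta=\si$ in $D_{2m}$. This is false for $m\geq 3$: with $\si,\ta$ both reflections, $\ta\si\ta=\si$ would force $\si\ta=\ta\si$, hence $m\mid 2$. So the telescoping you describe does not get off the ground. A correct group-theoretic route does exist---when $m$ is odd the two reflections $\si,\ta$ are conjugate in $D_{2m}$, so $\s^2$ and $\t^2$ are conjugate in $G$, hence equal in $C_n$ once the action is seen to be central; this is incompatible with $\s^2=\r$, $\t^2=1$ for $n$ even---but that is not the argument you wrote. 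The paper, by contrast, bypasses group theory entirely here: it simply observes that for cases 5 and 8 the dimension $\delta$ in Table~\ref{t2} has odd numerator and even denominator when $n$ is even and $m$ is odd, so $\delta$ is not an integer and these signatures do not occur.
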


\begin{proof}
We know that the second cohomology group is as follows; see Table 1 in \cite{AK}.
\begin{equation*}
H^2(D_{2m},C_{n}) \cong
\begin{cases}
1 & \text{if $(n,2)=1$}\\
C_{2} & \text{if $(n,2)=2$ and $(m,2)=1$}\\
C_2 \times C_2 \times C_2 & \text{ if $(n,2)=2$ and $(m,2)=2$}
\end{cases}
\end{equation*}

(1) If $n$ is odd then $\left|H^2(D_{2m},C_{n})\right|=1$. Hence $G \cong D_{2m} \times C_n$.

(2) If $n$ is even and $m$ is odd then $|H^2(D_{2m},C_{n})| =2$. So there are at most 2 extensions which could occur. For cases 4 and 6 $G \cong D_{2m} \times C_n$ because in those cases two elements of order 2 of $G$ lift to elements of same order. In cases 7 and 9, two elements of order 2 left to elements of order $2n$. Let $C_n=\left\langle \r|\r^n=1\right\rangle$. The group $D_{2m}$ has a presentation, $\left\langle \si, \ta ,\m| \si^2= \ta^2 = \m^m=1 \right\rangle$, where $\m=\si\ta$. Let $\s$ and $\t$ be the lifting of $\si$ and $\ta$ in $G$ respectively and we know that $C_n \triangleleft D_{2m}$. Hence $\s\r\s^{-1}=\r^l$ and $\t\r\t^{-1}=r^k$.  By Remark ~\ref{r1}, $(l,n)=1$, $l^2\equiv 1$ (mod $n$) and $(k,n)=1$, $k^2\equiv 1$ (mod $n$). We choose $k=l=1$. Since both $\si$ and $\ta$ lift to elements of order $n|\si|$ and $n|\ta|$ in $G$, then we choose $\s^{2}=\r$ and $\t^2=\r^{n-1}$, because both $\r$ and $\r^{n-1}$ have order $n$ in $C_n$. In case 9, $\m$ lifts to element of order $n|\m|$. Thus  we choose $(\s\t)^m=\r^{\frac{n}{2}}$, because $\r^{\frac{n}{2}}$ has order $2$ in $C_n$. Hence $G \cong G_9$, where $G_9$ is as follows.
\begin{align*}
\begin{split}
G_{9}=& \left\langle \r, \s, \t \right|\r^n=1,\s^2=\r,\t^2=\r^{n-1},(\s\t)^m=\r^{\frac{n}{2}},\s\r\s^{-1}=\r,\t\r\t^{-1}=\r \rangle\\
\end{split}
\end{align*}
we know that the dimension $\d$ is an integer. But $\d$'s of cases 5 and 8 cannot be an integer when $n$ is even and $m$ is odd. So there are no possible automorphism groups for these cases.

(3) If $n$ and $m$ both even then $|H^2(D_{2m},C_{n})| =6$.  So there are at most 6 extensions which could occur. As in proof of part (2), $D_{2m}=\left\langle \si, \ta ,\m| \si^2= \ta^2 = \m^m=1 \right\rangle$, where $\m=\si\ta$, $C_n=\left\langle \r|\r^n=1\right\rangle$ and $\s$ and $\t$ be the lifting of $\si$ and $\ta$ in $G$ respectively. Also, $\s\r\s^{-1}=\r^l$ and $\t\r\t^{-1}=r^k$. By Remark ~\ref{r1}, $(l,n)=1$, $l^2\equiv 1$ (mod $n$) and $(k,n)=1$, $k^2\equiv 1$ (mod $n$). We choose $k=1$.

In case 4, $\si$, $\ta$ and $\m$ lift in $G$ to elements of orders $|\si|$, $|\ta|$ and $|\m|$ respectively. Hence $G \cong D_{2m} \times C_n$.

If $\ta$ lifts to element of order $n|\ta|$ like in case 5, then we choose $\s$ such that $\s^2=\r$, because order of $\r$ is $n$ in $c_n$. Since other two generators lift to elements of same orders that they had before, $\t^2=1$ and $(\s\t)^m=1$. Further we choose $l=n-1$. So $G$ is isomorphic to $G_5$.

In case 6, $\m$ lifts to an element of order $n|\m|$ in $G$. Hence $G \cong D_{2mn}$.

In case 7, both $\si$ and $\ta$ lift to elements of orders $n|\si|$ and $n|\ta|$ in $G$, then we choose $\s^{2}=\r$ and $\t^2=\r^{n-1}$, because both $\r$ and $\r^{n-1}$ have order $n$ in $C_n$. Since other generator lifts to an element of same order that it has before, $(\s\t)^m=1$. Also we choose $l=1$. So $G$ is isomorphic to $G_7$.

If both $\si$ and $\m$  lift to elements of orders $n|\si|$ and $n|\m|$ like in case 8, then we choose $\s^{2}=\r$ and $(\s\t)^{m}=\r^{\frac{n}{2}}$. Since the order of $\ta$ is remaining the same, $\t^2=1$. Further we choose $l=n-1$. Hence $G \cong G_{8}$.

In case 9, $\si$, $\ta$ and $\m$ lift to elements of orders $n|\si|$, $n|\ta|$ and $n|\m|$ respectively, then  we choose $\s$ and $\t$ such that $\s^2=\r$, $\t^2=\r^{n-1}$ and $(\s\t)^m=\r^{\frac{n}{2}}$. Also we choose $l=1$. Hence $G \cong G_{9}$.

\end{proof}

\subsubsection{$\G \cong A_4$}

\begin{lemma}\label{l4}
Let $G$ be a group extension of $A_4$ by $C_n$ and let $n=p_1^{\alpha_1}p_2^{\alpha_2}...p_r^{\alpha_r}$. If 3 $\nmid (p_i-1)$ for all $1\leq i \leq r$, then G is a central extension.
\end{lemma}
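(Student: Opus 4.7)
The plan is to show that the conjugation action of $G$ on the normal subgroup $C_n$ is trivial, which is equivalent to $C_n \leq Z(G)$ and hence to $G$ being a central extension. First I would set up the standard map: since $C_n \triangleleft G$, conjugation defines a homomorphism $\phi\colon G \to \Aut(C_n) \cong (\Z/n\Z)^{\times}$. Because $C_n$ is abelian, $C_n \subseteq \ker\phi$, so $\phi$ descends to a homomorphism $\bar{\phi}\colon A_4 \cong G/C_n \to (\Z/n\Z)^{\times}$.

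Next I would exploit the abelian nature of the target: since $(\Z/n\Z)^{\times}$ is abelian, $\bar{\phi}$ factors through the abelianization $A_4^{\mathrm{ab}}$. The commutator subgroup of $A_4$ is the Klein four-group $V_4$ and $A_4/V_4 \cong C_3$, so the image of $\bar{\phi}$ has order dividing $3$. It therefore suffices to rule out the existence of an element of order $3$ in $(\Z/n\Z)^{\times}$.

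For this I would apply the Chinese Remainder Theorem to write
\[
(\Z/n\Z)^{\times} \;\cong\; \prod_{i=1}^{r}(\Z/p_i^{\alpha_i}\Z)^{\times},
\]
so that an element of order $3$ exists on the left if and only if $3$ divides $p_i^{\alpha_i-1}(p_i-1)$ for some $i$. Under the hypothesis $3\nmid (p_i-1)$, every odd prime factor $p_i\neq 3$ contributes a cyclic factor of order prime to $3$, while $p_i=2$ contributes a $2$-power. Hence no order-$3$ element exists, $\bar{\phi}$ is trivial, every element of $G$ centralises $C_n$, and the extension is central.

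The main obstacle is cleanly separating the arithmetic input (absence of an order-$3$ element in $(\Z/n\Z)^{\times}$) from the group-theoretic input (the abelianness of the target, forcing $\bar{\phi}$ to factor through $A_4^{\mathrm{ab}}\cong C_3$); everything else is bookkeeping. The most delicate case is $p_i=3$ with $\alpha_i\geq 2$, where $(\Z/3^{\alpha_i}\Z)^{\times}$ does contain an element of order $3$, so in that regime the stated hypothesis $3\nmid(p_i-1)$ is, on its face, insufficient and would need to be supplemented (for instance by $(n,3)=1$) or else one must argue from the ambient geometric setup that such a lift cannot arise for $G=\Aut(\X_g)$.
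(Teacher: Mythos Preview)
Your approach is essentially the paper's: both consider the induced homomorphism $A_4\to\Aut(C_n)$ and argue its image is trivial by an order count. The paper phrases this by listing the possible quotients of $A_4$ as $1$, $C_3$, $A_4$ and then asserting $3\nmid|\Aut(C_n)|$, whereas you factor through the abelianization $A_4^{\mathrm{ab}}\cong C_3$; the substance is identical.

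Your caveat about $p_i=3$ with $\alpha_i\geq 2$ is on point and in fact exposes the same gap in the paper's own argument: the claim ``$3\nmid|\Aut(C_n)|$'' is false when $9\mid n$ (e.g.\ $C_9\rtimes A_4$ via $A_4\twoheadrightarrow C_3\hookrightarrow(\Z/9\Z)^\times$ is a genuine non-central extension satisfying the stated hypothesis). The paper only applies the lemma in Theorem~\ref{th7}(3), where $3\nmid n$ is assumed separately, so the defect is harmless in context; but as a freestanding statement the lemma needs the extra hypothesis $3\nmid n$, exactly as you diagnosed.
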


\begin{proof}
Let's consider the conjugation action of $A_4$ on $C_n$ and the homomorphism $\gamma:A_4\longrightarrow Aut(C_n)$. Then $im(\gamma) \in 1, C_3, A_4$. If $n=p_1^{\alpha_1}p_2^{\alpha_2}...p_r^{\alpha_r}$ and 3 $\nmid (p_i-1)$ for all $1\leq i \leq r$, then 3 $\nmid\left|Aut(C_n)\right|$. So $3 \nmid\left|im(\gamma)\right|$. i.e. $\left|im(\gamma)\right|$=1. Hence $G$ is central extension of $A_4$ by $C_n$.
\end{proof}

\begin{thm}\label{th7}
Let $\X_g$ be a genus $g\geq 2$ cyclic curve with $\G \cong A_4$. Then $G:=Aut(\X_g)$ as follows.

(1) If n is odd and not a multiple of 3 then $G \cong A_4 \times C_n$.

(2) If n is odd and a multiple of 3 then $G \cong G'_{10},G'_{12},G'_{13},G'_{15}$ for the cases 10,12,13,15 in Table ~\ref{t2} respectively, where $G'_{10},G'_{12},G'_{13},G'_{15}$ are as follows.
\begin{align*}
\begin{split}
G'_{10}=& \left\langle \r, \s, \t \right|\r^n=1,\s^2=1,\t^3=1,(\s\t)^3=1,\s\r\s^{-1}=\r,\t\r\t^{-1}=\r^l \rangle\\
G'_{12}=& \left\langle \r, \s, \t \right|\r^n=1,\s^2=1,\t^3=\r^{\frac{n}{3}},(\s\t)^3=\r^{\frac{n}{3}},\s\r\s^{-1}=\r,\t\r\t^{-1}=\r^l \rangle\\
G'_{13}=& \left\langle \r, \s, \t \right|\r^n=1,\s^2=\r^{\frac{n}{3}},\t^3=1,(\s\t)^3=1,\s\r\s^{-1}=\r,\t\r\t^{-1}=\r^l \rangle\\
G'_{15}=& \left\langle \r, \s, \t \right|\r^n=1,\s^2=\r^{\frac{n}{3}},\t^3=\r^{\frac{n}{3}},(\s\t)^3=\r^{\frac{n}{3}},\s\r\s^{-1}=\r,\t\r\t^{-1}=\r^l \rangle
\end{split}
\end{align*}
where $(l,n)=1$ and $l^3\equiv 1$ (mod n).
Furthermore $G'_{10}\cong G'_{13}$, $G'_{12} \cong G'_{15}$ and there are no possible group extensions for the cases 11, 14 in Table ~\ref{t2}.

(3) If n is even, not a multiple of 3, then if $n$ satisfies the condition in Lemma ~\ref{l4} then $G \cong A_4 \times C_n$ when $G$ has ramification as in cases 10 and $G$ has ramification as in cases 11-15 in Table ~\ref{t2} then $G$ has a presentation:
\begin{center}
$\left\langle \r, \s, \t \right|\r^n=1,\s^2=\r^{\frac{n}{2}},\t^3=\r^{\frac{n}{2}},(\s\t)^3=\r^{\frac{n}{2}},\s\r\s^{-1}=\r,\t\r\t^{-1}=\r \rangle $
\end{center}
If $n$ does not satisfy the condition in Lemma ~\ref{l4}, then $G \cong G_{10},G_{11},G_{12},G_{13}$, $G_{14},G_{15}$ for the cases 10-15 in Table ~\ref{t2} respectively, where $G_{10} - G_{15}$ are as in (4).

(4) If n is even and multiple of 3 then $G \cong G_{10},G_{11},G_{12},G_{13},G_{14},G_{15}$ for the cases 10-15 in Table ~\ref{t2} respectively, where $G_{10} - G_{15}$ are as follows.
\begin{align*}
\begin{split}
G_{10}=& \left\langle \r, \s, \t \right|\r^n=1,\s^2=1,\t^3=1,(\s\t)^3=1,\s\r\s^{-1}=\r,\t\r\t^{-1}=\r^k \rangle\\
G_{11}=& \left\langle \r, \s, \t \right|\r^n=1,\s^2=1,\t^3=\r^{\frac{n}{2}},(\s\t)^3=1,\s\r\s^{-1}=\r,\t\r\t^{-1}=\r^k \rangle\\
G_{12}=& \left\langle \r, \s, \t \right|\r^n=1,\s^2=1,\t^3=\r^{\frac{n}{2}},(\s\t)^3=\r^{\frac{n}{2}},\s\r\s^{-1}=\r,\t\r\t^{-1}=\r^k \rangle\\
G_{13}=& \left\langle \r, \s, \t \right|\r^n=1,\s^2=\r^{\frac{n}{2}},\t^3=1,(\s\t)^3=1,\s\r\s^{-1}=\r,\t\r\t^{-1}=\r^k \rangle\\
G_{14}=& \left\langle \r, \s, \t \right|\r^n=1,\s^2=\r^{\frac{n}{2}},\t^3=\r^{\frac{n}{2}},(\s\t)^3=1,\s\r\s^{-1}=\r,\t\r\t^{-1}=\r^k \rangle \\
G_{15}=& \left\langle \r, \s, \t \right|\r^n=1,\s^2=\r^{\frac{n}{2}},\t^3=\r^{\frac{n}{2}},(\s\t)^3=\r^{\frac{n}{2}},\s\r\s^{-1}=\r,\t\r\t^{-1}=\r^k \rangle
\end{split}
\end{align*}
where $(k,n)=1$ and $k^3\equiv 1$ (mod n).
Furthermore $G_{10}\cong G_{11} \cong G_{12}$ and $G_{13} \cong G_{14} \cong G_{15}$.
\end{thm}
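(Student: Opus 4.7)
The plan is to mirror the strategy used in the proof of Theorem~\ref{th10}. Present $A_4 = \left\langle \si, \ta, \m \mid \si^2 = \ta^3 = \m^3 = 1,\ \m = \si\ta \right\rangle$ and write $C_n = \left\langle \r \mid \r^n = 1\right\rangle$. For any extension $1 \to C_n \to G \to A_4 \to 1$, fix lifts $\s, \t \in G$ of $\si, \ta$. Since $C_n \triangleleft G$, we have $\s\r\s^{-1} = \r^l$ and $\t\r\t^{-1} = \r^k$, and Remark~\ref{r1} forces $(l,n) = (k,n) = 1$ with $l^2 \equiv 1 \pmod n$ and $k^3 \equiv 1 \pmod n$. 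The relations $\s^2,\ \t^3,\ (\s\t)^3$ all land in $C_n$, and their orders are read off directly from the lifting data of the corresponding ramification case in Table~\ref{t2}: a generator whose lift has the same order as its image contributes a relation equal to $1$, while a generator whose order is multiplied by $n$ contributes an element of $C_n$ of order $n$, pinning the exponent of $\r$ to $n/2$ or $n/3$ depending on parity or divisibility by $3$.

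First I would read $H^2(A_4, C_n)$ off Table~1 of \cite{AK}; its value bifurcates according as $2 \mid n$ or $3 \mid n$, and that bifurcation is exactly what produces the four parts (1)--(4) of the statement. In each part, Lemma~\ref{l4} comes into play whenever every prime divisor $p_i$ of $n$ satisfies $3 \nmid (p_i - 1)$: the conjugation action of $A_4$ on $C_n$ must then be trivial, so $l = k = 1$ and $G$ is a central extension. Otherwise $k$ may range over the (possibly nontrivial) cube roots of unity modulo $n$, which is reflected in the parameter $l$ appearing in $G'_{10}, G'_{12}, G'_{13}, G'_{15}$.

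Next I would write down a presentation of $G$ in each ramification case 10--15 by combining the recipe above with the lift data. Case 10 gives $\s^2 = \t^3 = (\s\t)^3 = 1$; case 11 forces $\t^3$ to have order $n$ in $C_n$, hence $\t^3 = \r^{n/2}$ when $2 \mid n$ and $\t^3 = \r^{n/3}$ when $3 \mid n$; and similarly for the remaining cases. Whenever a sub-case demands an element of $C_n$ with an order incompatible with $n$ (for instance asking for an element of order $2$ in $C_n$ when $n$ is odd), the sub-case must be discarded. This accounts for the absence of extensions in cases 11 and 14 of part (2) and is consistent with $\d$ in Table~\ref{t2} failing to be a non-negative integer under those arithmetic restrictions.

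The main obstacle is verifying the coincidences $G'_{10} \cong G'_{13}$, $G'_{12} \cong G'_{15}$, and $G_{10} \cong G_{11} \cong G_{12}$, $G_{13} \cong G_{14} \cong G_{15}$. The mechanism is a change of lift: replacing $\t$ by $\t\r^a$ alters $\t^3$ by a factor $\r^{(1+k+k^2)a}$ and alters $(\s\t)^3$ by a controlled factor determined by $l$ and $k$; replacing $\s$ by $\s\r^b$ alters $\s^2$ by $\r^{(1+l)b}$. Choosing $a, b$ carefully, apparently distinct presentations collapse to the same group, which is the cocycle-level statement that they represent identical classes in $H^2(A_4, C_n)$. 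Carrying these substitutions through, and verifying that each resulting map respects every relation, is the most delicate bookkeeping step and is where I expect the bulk of the work to lie.
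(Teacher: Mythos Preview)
Your plan is essentially the paper's own argument. The paper likewise quotes $H^2(A_4,C_n)\cong C_{(n,2)}\times C_{(n,3)}$ from \cite{AK}, writes $A_4=\langle\si,\ta\mid\si^2=\ta^3=(\si\ta)^3=1\rangle$, lifts $\si,\ta$ to $\s,\t$, invokes Remark~\ref{r1} for the conjugation exponents, and reads off $\s^2,\t^3,(\s\t)^3\in C_n$ from the ramification case in Table~\ref{t2}; Lemma~\ref{l4} is used exactly where you place it. Two small differences worth flagging. First, in part~(3) the paper does not derive the nonsplit central extension by hand but identifies it with GAP; your proposal to obtain it directly from the cocycle bookkeeping is a reasonable substitute. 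Second, the isomorphisms $G'_{10}\cong G'_{13}$, $G'_{12}\cong G'_{15}$ and $G_{10}\cong G_{11}\cong G_{12}$, $G_{13}\cong G_{14}\cong G_{15}$ are simply asserted in the paper; your change-of-lift mechanism (replacing $\s$ by $\s\r^b$ and $\t$ by $\t\r^a$) is the right way to make them explicit and goes beyond what the paper records.

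One point where your outline is looser than you may realize: the exclusion of cases~11 and~14 in part~(2). Your stated criterion---a required element of $C_n$ having order incompatible with $n$---does not directly bite here, since for odd $n$ with $3\mid n$ nothing in case~11 forces an element of even order in $C_n$. The actual obstruction is that $\ta$ and $\m=\si\ta$ lie in the same $A_4$-conjugacy class, so no extension can make one lift to order $3n$ while the other lifts to order $3$; equivalently, no cohomology class realizes that asymmetric ramification pattern. The paper does not spell this out either, so when you execute the proof you should justify this step rather than rely on an order-incompatibility heuristic.
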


\begin{proof}
We know that the second cohomology group is as follows; see Table 1 in \cite{AK}.
\begin{equation*}
H^2(A_{4},C_{n}) \cong C_{(n,2)} \times C_{(n,3)}
\end{equation*}

(1) If $n$ is not a multiple of $3$ then $H^2(A_{4},C_{n}) =C_{(n,2)} \times C_1$. If we consider the case that $n$ is odd under the condition $n$ is not a multiple of 3, then $\left|H^2(A_{4},C_{n})\right|=1$. Hence $G \cong A_4 \times C_n$.

(2) If $n$ is odd and a multiple of 3 then $\left|H^2(A_{4},C_{n})\right|=3$. So there are at most 3 extensions which could occur. Let $C_n=\left\langle \r|\r^n=1\right\rangle$. The group $A_4$ has a presentation, $\left\langle \si, \ta ,\m| \si^2= \ta^3 = \m^3=1 \right\rangle$, where $\m=\si\ta$. Let $\s$ and $\t$ be the lifting of $\si$ and $\ta$ in $G$ respectively and we know that $C_n \triangleleft A_4$. Hence $\s\r\s^{-1}=\r^k$ and $\t\r\t^{-1}=r^l$. We choose $k=1$. By Remark ~\ref{r1}, $(l,n)=1$ and $l^3\equiv 1$ (mod $n$).

The case 10 in Table ~\ref{t2} is lifting of  $\si$, $\ta$ and $\m$ to elements of orders $|\si|$, $|\ta|$ and $|\m|$ respectively, then $\s^2=1,$ $\t^3=1$ and $(\s\t)^3=1$. Hence $G$ has a presentation as in $G'_{10}$.

If $\ta$ and $\m$ lift to elements of orders $n|\ta|$ and $n|\m|$ respectively, then we choose $\s$ and $\t$ such that $\t^3=\r^{\frac{n}{2}}$ and $(\s\t)^3=\r^{\frac{n}{2}}$. In case 12, we have such a situation. Since the order of $\si$ is remaining the same, $\s^2=1$. So $G$ has presentation as in $G'_{12}$.

In case 13, $\si$ lifts to an element of order $n|\si|$, then we choose $\s$ such that $\s^2=\r^{\frac{n}{2}}$. Since the orders of $\ta$ and $\m$ are remaining the same, $\t^3=1$ and $(\s\t)^3=1$. So $G \cong G'_{13}$.

In case 15, $\si$, $\ta$ and $\m$ lift to elements of orders $n|\si|$, $n|\ta|$ and $n|\m|$ respectively, then  we choose $\s$ and $\t$ such that $\s^2=\r^{\frac{n}{2}}$, $\t^3=\r^{\frac{n}{2}}$ and $(\s\t)^3=\r^{\frac{n}{2}}$. Hence $G \cong G'_{15}$.

(3) If $n$ is even and not a multiple of 3 then  $\left|H^2(A_{4},C_{n})\right|=2$. By Lemma ~\ref{l4}, if $n=p_1^{\alpha_1}p_2^{\alpha_2}...p_r^{\alpha_r}$ and $3\nmid (p_i-1)$ for all $1\leq i \leq r$ then $G$ is a central extension. Hence there are two extensions. So $G \cong A_4 \times C_n$ for the cases 10 in Table ~\ref{t2}, because $A_4 \times C_n$ does not have element of order $2n$. By using GAP algebra package we found out that $G:=\left\langle \r, \s, \t \right|\r^n=1,\s^2=\r^{\frac{n}{2}},\t^3=\r^{\frac{n}{2}},(\s\t)^3=\r^{\frac{n}{2}},\s\r\s^{-1}=\r,\t\r\t^{-1}=\r \rangle$ for the cases 11-15 in table ~\ref{t2}. If $n$ does not satisfy the condition in Lemma ~\ref{l4} then $G$ isomorphic to $G_{10}-G_{15}$ in (4) and proof is exactly similar to proof in (4).

(4) If $n$ is even then $\left|H^2(A_{4},C_{n})\right|=2$ or $6$. So there are at most 6 extensions which could occur. Let $C_n=\left\langle \r|\r^n=1\right\rangle$. As proof of part (2), $A_4$ has a presentation, $\left\langle \si, \ta ,\m| \si^2= \ta^3 = \m^3=1 \right\rangle$, where $\m=\si\ta$. Let $\s$ and $\t$ be the lifting of $\si$ and $\ta$ in $G$ respectively and we know that $C_n \triangleleft A_4$. Hence $\s\r\s^{-1}=\r^l$ and $\t\r\t^{-1}=r^k$. We choose $l=1$. By Remark ~\ref{r1}, $(k,n)=1$ and $k^3\equiv 1$ (mod $n$).

The case 10 in Table ~\ref{t2} is lifting of  $\si$, $\ta$ and $\m$ to elements of orders $|\si|$, $|\ta|$ and $|\m|$ respectively, then $\s^2=1,$ $\t^3=1$ and $(\s\t)^3=1$. Hence $G$ has a presentation as in $G_{10}$.

In case 11, $\ta$ lifts to an element of order $n|\ta|$, we choose $\t$ such that $\t^3=\r^{\frac{n}{2}}$. Since the orders of $\si$ and $\m$ are remaining the same, $\s^2=1$ and $(\s\t)^3=1$. Hence $G \cong G_{11}$.

If $\ta$ and $\m$ lift to elements of orders $n|\ta|$ and $n|\m|$ respectively, then we choose $\s$ and $\t$ such that $\t^3=\r^{\frac{n}{2}}$ and $(\s\t)^3=\r^{\frac{n}{2}}$. In case 12, we have such a situation. Since the order of $\si$ is remaining the same, $\s^2=1$. So $G$ has presentation as in $G_{12}$.

In case 13, $\si$ lifts to an element of order $n|\si|$, then we choose $\s$ such that $\s^2=\r^{\frac{n}{2}}$. Since the orders of $\ta$ and $\m$ are remaining the same, $\t^3=1$ and $(\s\t)^3=1$. So $G \cong G_{13}$.

If $\si$ and $\ta$ lift to elements of orders $n|\si|$ and $n|\ta|$ respectively like in case 14, then we choose $\s$ and $\t$ such that $\s^2=\r^{\frac{n}{2}}$ and $\t^3=\r^{\frac{n}{2}}$. Since the order of $\m$ does not change $(\s\t)^3=1$. Hence $G \cong G_{14}$.

In case 15, $\si$, $\ta$ and $\m$ lift to elements of orders $n|\si|$, $n|\ta|$ and $n|\m|$ respectively, then  we choose $\s$ and $\t$ such that $\s^2=\r^{\frac{n}{2}}$, $\t^3=\r^{\frac{n}{2}}$ and $(\s\t)^3=\r^{\frac{n}{2}}$. Hence $G \cong G_{15}$.
\end{proof}

\subsubsection{$\G \cong S_4$}

\begin{thm} \label{th8}
The full automorphism groups for the cases 16-23 in Table ~\ref{t2} as follows.

(1) If $n$ is odd then $G \cong S_4 \times C_n$.

(2) If $n$ is even then $G \cong G_{16},G_{17},G_{18},G_{19},G_{20},G_{21},G_{22},G_{23}$ for the cases 16-23 in Table ~\ref{t2} respectively, where $G_{16} - G_{23}$ are as follows.
\begin{align*}
\begin{split}
G_{16}=& \left\langle \r, \s, \t \right|\r^n=1,\s^2=1,\t^3=1,(\s\t)^4=1,\s\r\s^{-1}=\r^l,\t\r\t^{-1}=\r \rangle\\
G_{17}=& \left\langle \r, \s, \t \right|\r^n=1,\s^2=1,\t^3=\r^{\frac{n}{2}},(\s\t)^4=1,\s\r\s^{-1}=\r^l,\t\r\t^{-1}=\r \rangle\\
G_{18}=& \left\langle \r, \s, \t \right|\r^n=1,\s^2=1,\t^3=1,(\s\t)^4=\r^{\frac{n}{2}},\s\r\s^{-1}=\r^l,\t\r\t^{-1}=\r \rangle\\
G_{19}=& \left\langle \r, \s, \t \right|\r^n=1,\s^2=1,\t^3=\r^{\frac{n}{2}},(\s\t)^4=\r^{\frac{n}{2}},\s\r\s^{-1}=\r^l,\t\r\t^{-1}=\r \rangle\\
G_{20}=& \left\langle \r, \s, \t \right|\r^n=1,\s^2=\r^{\frac{n}{2}},\t^3=1,(\s\t)^4=1,\s\r\s^{-1}=\r^l,\t\r\t^{-1}=\r \rangle \\
G_{21}=& \left\langle \r, \s, \t \right|\r^n=1,\s^2=\r^{\frac{n}{2}},\t^3=\r^{\frac{n}{2}},(\s\t)^4=1,\s\r\s^{-1}=\r^l,\t\r\t^{-1}=\r \rangle\\
\end{split}
\end{align*}
\begin{align*}
\begin{split}
G_{22}=& \left\langle \r, \s, \t \right|\r^n=1,\s^2=\r^{\frac{n}{2}},\t^3=1,(\s\t)^4=\r^{\frac{n}{2}},\s\r\s^{-1}=\r^l,\t\r\t^{-1}=\r \rangle\\
G_{23}=& \left\langle \r, \s, \t \right|\r^n=1,\s^2=\r^{\frac{n}{2}},\t^3=\r^{\frac{n}{2}},(\s\t)^4=\r^{\frac{n}{2}},\s\r\s^{-1}=\r^l,\t\r\t^{-1}=\r \rangle
\end{split}
\end{align*}
where $(l,n)=1$ and $l^2\equiv 1$ (mod n).
Furthermore $G_{16}\cong G_{17}$, $G_{18} \cong G_{19}$, $G_{20} \cong G_{21}$ and $G_{22} \cong G_{23}$.
\end{thm}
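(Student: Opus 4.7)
The proof will closely parallel that of Theorem~\ref{th7} for $\G \cong A_4$, now with the presentation $S_4 \cong \langle \si, \ta, \m \mid \si^2 = \ta^3 = \m^4 = 1, \m = \si\ta \rangle$ in place of the $A_4$ presentation. First I would invoke the relevant second cohomology computation from Table~1 in \cite{AK}: for odd $n$, $H^2(S_4, C_n)$ is trivial, so the only extension of $S_4$ by $C_n$ is the direct product $S_4 \times C_n$, which settles part (1). The ramification in this case is forced to be that of sub-case $16$, since no element of $S_4$ can lift to one whose order is genuinely multiplied by $n$ inside a direct product when $n$ is odd.

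For part (2), with $n$ even, $H^2(S_4, C_n)$ has order at least $8$, predicting up to eight inequivalent extensions which should correspond precisely to sub-cases $16$--$23$. Writing $C_n = \langle \r \mid \r^n = 1 \rangle$ and letting $\s, \t$ lift $\si, \ta$ respectively, the normality $C_n \triangleleft G$ together with Remark~\ref{r1} forces $\s\r\s^{-1} = \r^l$ and $\t\r\t^{-1} = \r^k$ with $(l,n) = (k,n) = 1$, $l^2 \equiv 1 \pmod{n}$ and $k^3 \equiv 1 \pmod{n}$. I would take $k = 1$ (always a valid solution) and leave $l$ as a parameter. Since $\r^{n/2}$ is the unique element of order $2$ in $C_n$, the only way for $\si$, $\ta$, or $\m$ to lift to an element whose order doubles is to impose $\s^2 = \r^{n/2}$, $\t^3 = \r^{n/2}$, or $(\s\t)^4 = \r^{n/2}$ respectively; otherwise the corresponding relation remains $\s^2 = 1$, $\t^3 = 1$, or $(\s\t)^4 = 1$. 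Iterating through the $2^3 = 8$ choices of lifts, matched against the ramification data of sub-cases $16$--$23$ in Table~\ref{t2}, yields exactly the presentations $G_{16}, \ldots, G_{23}$.

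To establish the four stated isomorphism pairs $G_{16} \cong G_{17}$, $G_{18} \cong G_{19}$, $G_{20} \cong G_{21}$, and $G_{22} \cong G_{23}$, I would look for explicit changes of generators: each pair differs only in whether $\t^3 = 1$ or $\t^3 = \r^{n/2}$, and a substitution $\t \mapsto \t \r^a$ changes $\t^3$ by $\r^{3a}$ (using $k = 1$), so when a suitable $a$ exists one can absorb the discrepancy into the choice of lift without altering the other defining relations. The main anticipated obstacle, as in the $A_4$ theorem, is that this generator-level argument is sensitive to $\gcd(3,n)$ and does not uniformly succeed for every even $n$; in those cases I would follow the strategy adopted in the proof of Theorem~\ref{th7} and confirm the isomorphism types with the GAP algebra package. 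A final consistency check is to verify that the dimension $\d(G,\C)$ from Table~\ref{t2} is a non-negative integer in each subcase, which may further restrict which presentations among $G_{16}, \ldots, G_{23}$ actually occur for a given genus $g$.
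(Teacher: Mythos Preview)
Your overall architecture matches the paper's: invoke the cohomology from \cite{AK}, dispose of odd $n$ by triviality of $H^2$, and for even $n$ lift the generators $\si,\ta$ of $S_4=\langle\si,\ta\mid\si^2=\ta^3=(\si\ta)^4=1\rangle$ to $\s,\t$ with Remark~\ref{r1} governing the conjugation relations, then read off a presentation for each of the eight ramification sub-cases. That part is fine and essentially identical to the paper's proof.

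The substantive error is your cohomology count. You write that for even $n$, ``$H^2(S_4,C_n)$ has order at least $8$, predicting up to eight inequivalent extensions.'' In fact the paper cites $H^2(S_4,C_n)\cong C_{(n,2)}\times C_{(n,2)}$, which for even $n$ has order exactly $4$, not $8$. This is not a cosmetic slip: the fact that there are at most four extension classes is precisely what drives the concluding isomorphisms $G_{16}\cong G_{17}$, $G_{18}\cong G_{19}$, $G_{20}\cong G_{21}$, $G_{22}\cong G_{23}$. You have eight presentations arising from the $2^3$ lifting choices, but only four extension types available, so the presentations must collapse in pairs. With your incorrect bound of $8$ the isomorphism claims have no a priori reason to hold, and you are forced into the ad hoc generator-substitution argument you outline. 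That argument, as you yourself note, does not go through uniformly (the substitution $\t\mapsto\t\r^a$ also perturbs $(\s\t)^4$, and solvability of $3a\equiv n/2\pmod n$ depends on $\gcd(3,n)$), so it is not a complete replacement for the cohomological bound.

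Fix the order of $H^2$ to $4$ and the rest of your outline goes through as in the paper; the explicit substitution idea can then serve as a way to identify \emph{which} pairs collapse, something the paper's proof leaves implicit.
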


\begin{proof}
We know that the second cohomology group is as follows; see Table 1 in \cite{AK}.
\begin{equation*}
H^2(S_{4},C_{n}) \cong C_{(n,2)} \times C_{(n,2)}
\end{equation*}

(1) If $n$ is odd then $\left|H^2(S_{4},C_{n})\right|=1$. So $G \cong S_4 \times C_n$.

(2) If $n$ is even then $\left|H^2(S_{4},C_{n})\right|=4$. So there are at most 4 extensions which could occur. Let $C_n=\left\langle \r|\r^n=1\right\rangle$. The group $S_4$ has a presentation: $\left\langle \si, \ta ,\m| \si^2= \ta^3 = \m^4=1 \right\rangle$, where $\m=\si\ta$. Let $\s$ and $\t$ be the lifting of $\si$ and $\ta$ in $G$ respectively and we know that $C_n \triangleleft S_4$. Hence $\s\r\s^{-1}=\r^l$ and $\t\r\t^{-1}=r^k$. We choose $k=1$. By Remark ~\ref{r1}, $(l,n)=1$ and $l^2\equiv 1$ (mod $n$).

The case 16 in Table ~\ref{t2} is lifting of  $\si$, $\ta$ and $\m$ to elements of orders $|\si|$, $|\ta|$ and $|\m|$ respectively, then $\s^2=1,$ $\t^3=1$ and $(\s\t)^4=1$. Hence $G$ has a presentation as in $G_{16}$.

In case 17, $\ta$ lifts to an element of order $n|\ta|$, we choose $\t$ such that $\t^3=\r^{\frac{n}{2}}$. Since the orders of $\si$ and $\m$ are remaining the same, $\s^2=1$ and $(\s\t)^4=1$. Hence $G \cong G_{17}$.

If $\m$ lifts to an element of order $n|\m|$ like in case 18, then we choose $\s$ and $\t$ such that $(\s\t)^4=\r^{\frac{n}{2}}$. Since the orders of $\si$ and $\ta$ don't change, $\s^2=1$ and $\s\t^3=1$. Hence $G \cong G_{18}$.

If $\ta$ and $\m$ lift to elements of orders $n|\ta|$ and $n|\m|$ respectively, then we choose $\s$ and $\t$ such that $\t^3=\r^{\frac{n}{2}}$ and $(\s\t)^4=\r^{\frac{n}{2}}$. In case 19, we have such a situation. Since the order of $\si$ is remaining the same, $\s^2=1$. So $G$ has presentation as in $G_{19}$.

In case 20, $\si$ lifts to an element of order $n|\si|$, then we choose $\s$ such that $\s^2=\r^{\frac{n}{2}}$. Since the orders of $\ta$ and $\m$ are remaining the same, $\t^3=1$ and $(\s\t)^4=1$. So $G \cong G_{20}$.

If $\si$ and $\ta$ lift to elements of orders $n|\si|$ and $n|\ta|$ respectively like in case 21, then we choose $\s$ and $\t$ such that $\s^2=\r^{\frac{n}{2}}$ and $\t^3=\r^{\frac{n}{2}}$. Since the order of $\m$ does not change $(\s\t)^4=1$. Hence $G \cong G_{21}$.

In case 22, $\si$ and $\m$ lift to elements of orders $n|\si|$ and $n|\m|$ respectively, then we choose $\s$ and $\t$ such that $\s^2=\r^{\frac{n}{2}}$ and $(\s\t)^4=\r^{\frac{n}{2}}$. Since the order of $\m$ does not change $(\s\t)^4=1$. Hence $G$ has a presentation as in $G_{22}$.

In case 23, $\si$, $\ta$ and $\m$ lift to elements of orders $n|\si|$, $n|\ta|$ and $n|\m|$ respectively, then  we choose $\s$ and $\t$ such that $\s^2=\r^{\frac{n}{2}}$, $\t^3=\r^{\frac{n}{2}}$ and $(\s\t)^4=\r^{\frac{n}{2}}$. Hence $G \cong G_{23}$.

\end{proof}

\subsubsection{$\G \cong A_5$}

\begin{lemma} \label{l2}
Let $\G$ be either $A_{5}$ or $PSL_{2}(q)$. Then an extension of $\G$ by $C_{n}$ is central.
\end{lemma}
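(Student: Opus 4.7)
The plan is to exploit the fact that both $A_5$ and $PSL_2(q)$ are perfect groups, while $\Aut(C_n)$ is abelian, so any action of $\G$ on $C_n$ by automorphisms must be trivial.

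First I would set up the conjugation action. Given an extension $1 \to C_n \to G \to \G \to 1$ with $C_n \triangleleft G$, conjugation gives a homomorphism $G \to \Aut(C_n)$. Since $C_n$ is abelian it acts trivially on itself, so this descends to a homomorphism
\[
\gamma \colon \G = G/C_n \longrightarrow \Aut(C_n).
\]
The key observation is that $\Aut(C_n) \cong (\Z/n\Z)^*$ is abelian, so $\gamma$ factors through the abelianization $\G^{\text{ab}}$.

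Next I would invoke perfectness of $\G$. The group $A_5$ is simple and non-abelian, hence perfect, so $A_5^{\text{ab}} = 1$. For $PSL_2(q)$ with $q = p^f$ and $p > 5$ (which is the running assumption of this subsection, ensuring $q \geq 7$), the group $PSL_2(q)$ is simple and non-abelian, so it too is perfect. In either case $\G^{\text{ab}}$ is trivial, which forces $\gamma$ to be the trivial homomorphism.

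The conclusion is then immediate: every element of $G$ commutes with every element of $C_n$, i.e.\ $C_n \subseteq Z(G)$, so the extension is central. There is no real obstacle here beyond citing perfectness of the two simple groups; the only point requiring a moment of care is confirming that the hypothesis $p>5$ of the subsection rules out the small exceptional cases $PSL_2(2)\cong S_3$ and $PSL_2(3)\cong A_4$ where $PSL_2(q)$ fails to be simple, and this is automatic.
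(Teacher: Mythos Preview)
Your argument is correct and is essentially identical to the paper's: both set up the conjugation homomorphism $\G \to \Aut(C_n)$, observe that $\Aut(C_n)$ is abelian, and conclude triviality from the fact that $\G$ is a non-abelian simple (hence perfect) group. Your version is slightly more explicit in explaining why the action descends from $G$ to $\G$ and in noting that the ambient hypothesis $p>5$ excludes the non-simple cases $PSL_2(2)$ and $PSL_2(3)$, but the underlying idea is the same.
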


\begin{proof}
Let us consider the conjugation action of $A_{5}$ on $C_{n}$. The image of the induce homomorphism $\pi :A_{5} \longrightarrow Aut(C_{n})$ is a quotient of $A_{5}$. Since $Aut(C_{n})$ is abelian group, $im(\pi)$ is abelian group. $\G$ is non abelian simple group. Hence $\G$ is perfect group and $\frac{\G}{[\G,\G]}=1$. So $\G$ has only trivial abelian quotient. Therefore $im(\pi)$=1. Hence the action of $A_{5}$ on $C_{n}$ is trivial. Therefore extension of $A_{5}$ by $C_{n}$ is central.
\end{proof}

\begin{thm} \label{th5}
The automorphism groups for the cases 24-31 in Table ~\ref{t2} are as follows. If $n$ is odd or $G$ has a ramification as in cases 24-27 in Table ~\ref{t2} then $G \cong A_{5}\times C_{n}$. Otherwise $G$ admits group has presentation as:
\begin{center}
$\left\langle \r, \s, \t \right|\r^n=1,\s^2=\r^{\frac{n}{2}},\t^3=\r^{\frac{n}{2}},(\s\t)^5=\r^{\frac{n}{2}},\s\r\s^{-1}=\r,\t\r\t^{-1}=\r \rangle $
\end{center}
\end{thm}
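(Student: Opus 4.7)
The strategy mirrors Theorems \ref{th9}--\ref{th8}: classify central extensions of $A_5$ by $C_n$ via cohomology, then match each subcase 24--31 to the extension that realizes its ramification signature. By Lemma \ref{l2} every extension of $A_5$ by $C_n$ is central, and since the Schur multiplier is $H_2(A_5,\Z) \cong C_2$, the universal coefficient theorem gives
\[
H^2(A_5, C_n) \cong \mathrm{Hom}(C_2, C_n) \cong C_{(n,2)}.
\]
Thus there is a unique central extension when $n$ is odd and exactly two when $n$ is even.

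When $n$ is odd the unique extension is the direct product, so $G \cong A_5 \times C_n$ for every subcase 24--31, disposing of one half of the statement. When $n$ is even, the two central extensions are $A_5 \times C_n$ and a nontrivial one, realized as the quotient of $\widetilde{A}_5 \times C_n$ obtained by identifying the central involution $z$ of $\widetilde{A}_5 \cong \mathrm{SL}_2(5)$ (the Schur cover of $A_5$, with presentation $\widetilde{\si}^2 = \widetilde{\ta}^3 = (\widetilde{\si}\widetilde{\ta})^5 = z$) with $\r^{n/2} \in C_n$. This identification yields precisely the presentation asserted in the theorem.

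To match each subcase to the correct extension, I would use the following obstruction: in $A_5 \times C_n$ with $n$ even, every lift of the involution $\si \in A_5$ has the form $(\si, \r^j)$ and thus order $\mathrm{lcm}(2, \mathrm{ord}\,\r^j)$, which divides $n$; no such lift attains order $2n$. Since the signatures of cases 28--31 each require an inertia generator of order $2n$ projecting to $\si$, these cases cannot be realized by $A_5 \times C_n$ and must come from the nontrivial extension, giving the asserted presentation. Cases 24--27, where $\si$ is lifted only to order $2$, are realized inside $A_5 \times C_n$ by choosing suitable $\r^j$-translates of the canonical lifts of $\si$, $\ta$, $\m$ so that their orders match the prescribed inertia.

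The main obstacle is the last verification. For each of cases 24--27 one must exhibit compatible lifts in $A_5 \times C_n$ realizing the prescribed (possibly inflated to $3n$ or $5n$) orders for $\ta$ and $\m$, and for cases 28--31 one must check that the nontrivial extension simultaneously accommodates the higher-order inertia on $\si$ together with whatever orders are demanded on $\ta$ and $\m$. This reduces to analyzing the congruences $(\s\r^j)^2 = \r^{n/2+2j}$, $(\t\r^k)^3 = \r^{n/2+3k}$, and $((\s\t)\r^\ell)^5 = \r^{n/2+5\ell}$ in each extension, and is where the arithmetic conditions involving $\gcd(n,6)$ and $\gcd(n,10)$ must be carefully tracked.
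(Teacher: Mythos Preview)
Your approach matches the paper's: invoke Lemma~\ref{l2} for centrality, determine $H^2(A_5,C_n)\cong C_{(n,2)}$, and use the absence of order-$2n$ elements in $A_5\times C_n$ to force cases 28--31 into the nontrivial extension while leaving 24--27 to the direct product. The paper cites \cite{AK} for the cohomology and uses GAP to identify the presentation parameters $a=b=c=n/2$, whereas you obtain $H^2$ via the universal coefficient theorem and build the nontrivial extension directly from the Schur cover $\mathrm{SL}_2(5)$; the realizability verification you flag as an obstacle is not carried out in the paper either.
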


\begin{proof}
By Lemma ~\ref{l2}, we know that extension is central and We know that the second cohomology group is as follows; see Table 1 in \cite{AK}.
\begin{equation*}
H^2(A_{5},C_{n}) \cong
\begin{cases}
1 & \text{if $n$ is odd}\\
C_{2} & \text{if $n$ is even}
\end{cases}
\end{equation*}
Hence if $n$ is odd there is only one central extension. Since $C_{n}$ is abelian $G \cong A_{5}\times C_{n}$. If $n$ is even there are two central extensions. The one possibility is $A_{5}\times C_{n}$. According to the ramification of the cases 28-31 in Table ~\ref{t2}, $G$ has element of order $2n$. But  $A_{5}\times C_{n}$ does not have element of order $2n$. Hence if $G$ has a ramification of the cases 24-27 in Table ~\ref{t2} then $G \cong A_{5}\times C_{n}$. Since $A_5=\left\langle  \s, \t \right|\s^2=\t^3=(\s\t)^5=1 \rangle$, all possible central extensions are of the form  $\left\langle \r, \s, \t \right|\r^n=1,\s^2=\r^{a},\t^3=\r^{b},(\s\t)^5=\r^{c},\s\r\s^{-1}=\r,\t\r\t^{-1}=\r \rangle$ where $a,b,c \in \{1,...,n\}$. If $a=b=c=1$ then the above presentation gives $A_{5}\times C_{n}$. We use GAP algebra package to calculate suitable $a$, $b$ and $c$ for the cases 24-27 in Table ~\ref{t2} and we found out that $a=b=c=\frac{n}{2}$.
\end{proof}

\subsubsection{$\G \cong U$} We defined $U=C_P^t$.

\begin{thm} \label{th12}
Let $\X_g$ be a genus $g\geq 2$ cyclic curve with $\G \cong U$. Then $G:=Aut(\X_g)$ as follows.

(1) If $G$ has ramification as in case 32 in Table ~\ref{t2} then $G$ has presentation:
\begin{multline*}
< \r,\s_1,\s_2,...,\s_t|\r^n=\s_1^p=\s_2^p=...=\s_t^p=1,\\
 \s_i\s_j=\s_j\s_i, \s_i\r\s_i^{-1}=\r^{l}, 1\leq i,j\leq t>
\end{multline*}
where $(l,n)=1$ and $l^p \equiv 1$ (mod n).

(2) If $G$ has ramification as in case 33 in Table ~\ref{t2} then $G \cong U \times C_n$.

\end{thm}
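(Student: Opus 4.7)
The plan is to analyze $G$ as an extension $1 \to C_n \to G \to U \to 1$ and read off the presentation from the ramification data, following the pattern of Theorems \ref{th9}--\ref{th8}. Since $C_n = \langle \rho \rangle \triangleleft G$, any $\sigma \in G$ acts by conjugation as $\sigma\rho\sigma^{-1} = \rho^{l(\sigma)}$ with $(l(\sigma),n)=1$, giving a homomorphism $U \to \Aut(C_n) \cong (\Z/n\Z)^*$. If $\sigma$ maps to an order-$p$ element of $U$, then Remark \ref{r1} forces $l(\sigma)^p \equiv 1 \pmod{n}$.

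For Part~(1), case 32 corresponds to the inertia subgroup at the wild branch point in $G$ having order $p^t = |U|$. I would choose lifts $\sigma_1,\ldots,\sigma_t$ of a generating set of $U$ inside this inertia subgroup. Each $\sigma_i$ then has order exactly $p$ (rather than $np$), and the $\sigma_i$'s pairwise commute because the inertia projects isomorphically onto $U \cong C_p^t$. Combined with the conjugation relations $\sigma_i\rho\sigma_i^{-1}=\rho^{l}$ derived above, these relations present a group of order at most $np^t = |G|$, hence present $G$ itself.

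For Part~(2), case 33 has inertia of order $np^t = |G|$ at the wild branch point, so this wild inertia coincides with $G$ itself. The signature $(np^t, n, \ldots, n)$ obtained in the proof of Theorem~\ref{th1} then forces the lifts $\sigma_i$ of the generators of $U$ to commute elementwise with $\rho$, since the tame/wild decomposition of the inertia at a branch point where $G$ acts with ramification index $np^t$ is a direct product with $(n,p)=1$. Combined with $\sigma_i^p = 1$ and pairwise commutativity of the $\sigma_i$'s, this exhibits $G$ as $U \times C_n$.

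The main obstacle is Part~(1)'s assertion of a \emph{uniform} exponent $l$ governing every relation $\sigma_i\rho\sigma_i^{-1}=\rho^{l}$: a priori each $\sigma_i$ could act on $C_n$ via a distinct $l_i$ with $l_i^p \equiv 1 \pmod{n}$, and the $p$-torsion subgroup of $(\Z/n\Z)^*$ can have rank greater than one. I would handle this by choosing an $\F_p$-basis $\sigma_1,\ldots,\sigma_t$ of $U$ adapted to the kernel of $U \to \Aut(C_n)$: after such a change of basis, at most one generator acts nontrivially on $C_n$ while the remaining generators commute with $\rho$, and interpreting the common exponent $l$ accordingly (with $l=1$ for generators that lie in the kernel) recovers the presentation as stated.
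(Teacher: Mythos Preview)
Your Part~(1) tracks the paper's argument closely: the paper also writes down lifts $s_i$ with $s_i^p=1$, $s_is_j=s_js_i$, $s_i r s_i^{-1}=r^{l_i}$, and then simply declares ``we choose $l=l_i$'' without further comment. Your worry about the uniform exponent is therefore well placed, but your proposed fix (``at most one generator acts nontrivially'') only goes through when the image of $U\to\Aut(C_n)$ is cyclic; if the $p$-torsion of $(\Z/n\Z)^\ast$ has $\F_p$-rank $>1$ you may still be left with several independent exponents. This is arguably a defect of the statement rather than of your proof, and the paper does no better.

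Part~(2) is where your route diverges from the paper and where there is a genuine gap. The paper's argument is: in case~33 the signature contains $np^t$, so $G$ possesses an element of order $np^t$, and among extensions of $U$ by $C_n$ only $U\times C_n$ has such an element. Your argument instead invokes the ramification filtration of the inertia group. The problem is the sentence ``the tame/wild decomposition of the inertia \ldots\ is a direct product'': for a general inertia group $I_0$ one only has $I_1\triangleleft I_0$ with $I_0/I_1$ cyclic of order prime to $p$, i.e.\ a \emph{semi}direct product, and the conjugation action of the tame quotient on $I_1$ can be nontrivial. So this step, as written, does not force the $\sigma_i$ to commute with $\rho$.

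The repair is short and uses exactly the extra piece of structure you have available. From $G=I_0$ you get that the $p$-Sylow $P$ (the wild inertia $I_1$) is normal in $G$; from the hypothesis $C_n\triangleleft G$ you already know $C_n$ is normal. Two normal subgroups with trivial intersection and complementary orders give $G=C_n\times P$, and $P\cong G/C_n\cong U$. Inserting this observation in place of the ``direct product'' claim makes your inertia-based argument correct and, in fact, cleaner than the paper's appeal to an element of order $np^t$.
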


\begin{proof}
(1) Let $U= < \si_1,\si_2,...,\si_t|\si_1^p=\si_2^p=...=\si_t^p=1, \si_i\si_j=\si_j\si_i, 1\leq i,j\leq t>$. Let $C_n=\left\langle \r|\r^n=1\right\rangle$. Let $\s_1,\s_2,...,\s_t$ be the lifting of $\si_1,\si_2,...,\si_t$ in $G$ respectively. In case 32, $\si_1...\si_t$ lifts to an element of order $|\si_1...\si_t|$, then $\s_1^p=\s_2^p=...=\s_t^p=1, \s_i\s_j=\s_j\s_i, 1\leq i,j\leq t $. Since $C_n \triangleleft U$, $\s_i\r\s_i^{-1}=\r^{l_i}, 1\leq i\leq t $. By Remark ~\ref{r1}, $(l_i,n)=1$ and $l_i^p \equiv 1$ (mod n), for $1\leq i\leq t $. We choose $l=l_i$ for  $1\leq i\leq t $. Hence $G$ has presentation,
\begin{multline*}
< \r,\s_1,\s_2,...,\s_t|\r^n=\s_1^p=\s_2^p=...=\s_t^p=1, \s_i\s_j=\s_j\s_i, \s_i\r\s_i^{-1}=\r^{l}, 1\leq i,j\leq t>.
\end{multline*}

(2) In case 33, $G$ has an element of order $np^t$. We know that $(n,p)=1$ and $n|p^t-1$. Hence $(n,p^t)=1$. So among the extensions of $U$ by $C_n$, $U \times C_n$ is the only one extension for which has an element of order $np^t$. So in this case $G \cong U \times C_n$.

\end{proof}

\subsubsection{$\G \cong K_m$} We know that $K_m=C_p^t \rtimes C_m$ and $m|p^t-1$.

\begin{thm}\label{th13}
Let $\X_g$ be a genus $g\geq 2$ cyclic curve with $\G \cong K_m$. Then $G:=Aut(\X_g)$ as follows.

(1) If $G$ has ramification as in case 34 in Table ~\ref{t2} then $G$ has presentation:
\begin{multline*}
<\r,\s_1,...,\s_t,v|\r^n=\s_1^p=...=\s_t^p=v^m=1, \s_i\s_j=\s_j\s_i,\\
v\r v^{-1}=\r, \s_i\r\s_i^{-1}=\r^{l}, \s_iv\s_i^{-1}=v^{k}, 1\leq i,j\leq t >
\end{multline*}
where $(l,n)=1$ and $l^p \equiv 1$ (mod n), $(k,m)=1$ and $k^p \equiv 1$ (mod m).

(2) If $G$ has ramification as in case 35,36 and 37 in Table ~\ref{t2} then $G$ has presentation:
\begin{center}
$G_{35}=\left\langle \r,\s_1,...,\s_t|\r^{nm}=\s_1^p=...=\s_t^p=1, \s_i\s_j=\s_j\s_i, \s_i\r\s_i^{-1}=\r^{l}, 1\leq i,j\leq t\right\rangle $
\end{center}
where $(l,nm)=1$ and $l^p \equiv 1$ (mod nm).
\end{thm}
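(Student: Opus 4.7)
The plan follows the pattern established in Theorems~\ref{th9}--\ref{th5}: fix a presentation of $\G\cong K_m=C_p^t\rtimes C_m$, lift its generators to $G$, read off the orders of those lifts from the ramification data in Table~\ref{t2}, and then apply Remark~\ref{r1} to determine how each lift conjugates the generator $\r$ of $C_n\vartriangleleft G$.

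For part~(1) (case 34), I write $K_m=\langle\sigma_1,\dots,\sigma_t,\tau\rangle$, with $\sigma_1,\dots,\sigma_t$ generating the elementary abelian normal subgroup $C_p^t$ and $\tau$ generating the complementary $C_m$. Let $\s_1,\dots,\s_t$ and $v$ be lifts in $G$ of $\sigma_1,\dots,\sigma_t$ and $\tau$. The ramification in case 34 forces both the order-$mp^t$ generator and the order-$m$ generator of $\G$ to lift with unchanged order, giving $\s_i^p=1$ and $v^m=1$ and preserving the mutual commutation $\s_i\s_j=\s_j\s_i$. Since $C_n\vartriangleleft G$, each generator acts by a power on $\r$: $\s_i\r\s_i^{-1}=\r^{l_i}$ and $v\r v^{-1}=\r^{l'}$. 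By Remark~\ref{r1}, $(l_i,n)=1$ with $l_i^p\equiv 1\pmod n$ and $(l',n)=1$ with $l'^m\equiv 1\pmod n$. After selecting the lifts so that a single parameter $l$ governs all $\s_i$ and so that $l'=1$, the relation $\s_iv\s_i^{-1}=v^k$ is imposed to capture the surviving $K_m$-action inside $G$, and the constraints $(k,m)=1$, $k^p\equiv 1\pmod m$ follow again from Remark~\ref{r1} applied with roles reversed.

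For part~(2) (cases 35, 36, 37), the key observation is that in each of these cases some cyclic subgroup of $\G$ containing $\tau$ lifts to an element of $G$ of order $nm$. Equivalently, the subgroup $\langle v,\r\rangle\leq G$ is cyclic of order $nm$: either $v$ already has order $nm$ in $G$ (cases 35 and 37), or in case 36 the order-$mp^t$ generator lifts to an order-$nmp^t$ element whose $\tau$-component fuses with $\langle\r\rangle$. One then relabels a generator of this order-$nm$ cyclic subgroup by $\r$, absorbing $v$ into $\langle\r\rangle$. The remaining generators are $\s_1,\dots,\s_t$, still of order $p$ and pairwise commuting. Conjugation of $\r$ by $\s_i$ must be a power $\r^l$ with $(l,nm)=1$ and $l^p\equiv 1\pmod{nm}$ by Remark~\ref{r1}. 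This is precisely the presentation $G_{35}$, so the three cases collapse to a single isomorphism class.

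The main obstacle lies in justifying the \emph{uniform} choice of parameters (a single $l$ controlling the action of every $\s_i$ on $\r$, and in part~(1) a single $k$ for the action on $v$) rather than a separate exponent per generator. This requires using the semidirect-product relations of $K_m$ together with the compatibility conditions $l_i^p\equiv 1\pmod n$ to check that such a simultaneous choice is possible, possibly after changing basis in the elementary abelian factor. Once the consistent lifts are selected, the remaining work — matching the orders to Table~\ref{t2} and merging cases 35--37 into the single presentation $G_{35}$ — is a routine assembly from Remark~\ref{r1}.
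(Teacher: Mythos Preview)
Your argument for part~(1) is essentially the paper's: fix a presentation of $K_m$, lift the generators, infer their orders from case~34, and invoke Remark~\ref{r1} to obtain $\s_i\r\s_i^{-1}=\r^{l_i}$, $v\r v^{-1}=\r^{a}$ with the stated congruence conditions, then normalise to a single $l$ and to $a=1$. The paper makes exactly these choices, with no more justification than ``we choose $l=l_i$'' and ``we choose $a=1$''. Your explicit flagging of the uniform-parameter issue (one $l$ for all $\s_i$, one $k$ for the action on $v$) is a fair criticism of both arguments; the paper does not address it either.

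For part~(2) the two arguments diverge slightly. You argue constructively: in each of cases 35--37 the subgroup generated by (the lift of) $\tau$ together with $\r$ is cyclic of order $nm$, so one may rename its generator $\r$ and reduce to an extension of $C_p^t$ by $C_{nm}$, whence Remark~\ref{r1} yields the presentation $G_{35}$. The paper instead argues by recognition: in cases 35--37 the ramification forces $G$ to contain elements of orders $nm$ and $nmp^t$, and among all extensions of $K_m$ by $C_n$ the group $G_{35}$ is the unique one with elements of those orders, so $G\cong G_{35}$ (it also remarks that $G_{35}$ coincides with the group of case~34 when $(n,m)=1$). Your route is more explicit about why $v$ can be absorbed into the cyclic factor; the paper's route is shorter but relies on an unproved uniqueness claim about the extensions. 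Either line leads to the same presentation.
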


\begin{proof}
(1) Let $K=<\si_1,\si_2,...,\si_t,u|\si_1^p=\si_2^p=...=\si_t^p=u^m=1, \si_i\si_j=\si_j\si_i, \si_iu\si_i^{-1}=u^{k_i}, 1\leq i,j\leq t>$, $(k_i,m)=1$ and $k_i^p \equiv 1$ (mod m). Let $C_n=\left\langle \r|\r^n=1\right\rangle$. Let $\s_1,\s_2,...,\s_t,v$ be the lifting of $\si_1,\si_2,...,\si_t,u$ in $G$ respectively. In case 34, $u\si_1...\si_t$ lifts to an element of order $|u\si_1...\si_t|$, then $\s_1^p=\s_2^p=...=\s_t^p=v^m=1, \s_i\s_j=\s_j\s_i, \s v\s^{-1}=v^{k_i}, 1\leq i,j\leq t $. We choose $k=k_i$ for  $1\leq i\leq t $. Since $C_n \triangleleft U$, $\s_i\r\s_i^{-1}=\r^{l_i}, 1\leq i\leq t $.  By Remark ~\ref{r1}, $(l_i,n)=1$ and $l_i^p \equiv 1$ (mod n), for $1\leq i\leq t $. We choose $l=l_i$ for $1\leq i\leq t $. Also $v\r v^{-1}=\r^a$. By Remark ~\ref{r1}, $(a,n)=1$ and $a^m \equiv 1$ (mod n). we choose $a=1$. Hence $G$ has presentation,
\begin{multline*}
<\r,\s_1,...,\s_t,v|\r^n=\s_1^p=...=\s_t^p=v^m=1, \s_i\s_j=\s_j\s_i,\\
v\r v^{-1}=\r, \s_i\r\s_i^{-1}=\r^{l}, \s_iv\s_i^{-1}=v^{k}, 1\leq i,j\leq t >
\end{multline*}
(2) In case 35, 36 and 37, $G$ has elements of orders $nmp^t$ and $nm$. Among the extension of $K_m$ by $C_n$, $G_{35}$ is the only one extension so that it has elements of orders $nmp^t$ and $nm$. Non of other extensions have either elements of orders $nm$ or $nmp^t$. Note that if $(n,m)=1$, then $G_{35}$ is isomorphic to the group $G$ of case 34.
\end{proof}

\subsubsection{$\G \cong PSL_{2}(q)$}
We know that $q=p^f$ where $p$ is the characteristic of field $k$.

\begin{thm}\label{th6}
Let G be a Aut($\X_g$) where $\X_g$ is a cyclic curve of genus $g\geq2$ with $\G \cong PSL_{2}(q)$, $q\neq 9$ then G is as follows.

(1) If $G$ has ramification as in cases 38 and 39 then $G\cong PSL_2(q)\times C_n$.

(2) If $G$ has ramification as in cases 40 and 41 and $q=3$ then $G \cong SL_2(3)$. There are no possible groups for $q\neq 3$.
\end{thm}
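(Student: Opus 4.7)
The plan is to mirror the cohomological strategy already established for the preceding cases. By Lemma \ref{l2}, any extension of $\G \cong PSL_2(q)$ by $C_n$ is central. Invoking Table~1 of \cite{AK},
\[
H^2(PSL_2(q),\, C_n) \cong C_{(n,2)}, \qquad q \neq 9,
\]
so at most two candidates arise: the split extension $PSL_2(q) \times C_n$ and, when $n$ is even, a single non-split central extension obtained by pushing out the Schur double cover $SL_2(q) \twoheadrightarrow PSL_2(q)$ along $C_2 \hookrightarrow C_n$.

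For cases 38 and 39, the first ramification index remains $\alpha$, so the wild inertia lifts without being multiplied by $n$. I would argue that this already rules out the non-split candidate: the wild-inertia subgroup is (conjugate to) the Borel subgroup of order $\alpha$, and inside the Schur-type extension its preimage has order $2\alpha$ with no complement of order $\alpha$, whereas in the direct product $PSL_2(q) \times C_n$ the obvious embedding $B \hookrightarrow B \times \{e\}$ realizes the required lift. Hence $G \cong PSL_2(q) \times C_n$, and the tame ramification of order $\beta$ (case~38) or $n\beta$ (case~39) is then realized by pairing a generator of order $\beta$ in $PSL_2(q)$ with an appropriate element of $C_n$.

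For cases 40 and 41 the arithmetic constraint $e_1^* = n(q-1)/2 \mid q-1$ from \cite{VM}, Theorem~1, forces $n \in \{1,2\}$. The case $n=1$ collapses to cases 38 and 39 and contributes nothing new. The case $n=2$ requires the non-split central extension, which is $SL_2(q)$; here the wild inertia must have order $2\alpha = q(q-1)$, i.e.\ it must coincide with the Borel of $SL_2(q)$. A direct check on element orders inside this Borel shows that a cyclic tame complement of order $q-1$ can fit together with the tame generator of order $2\beta = q+1$ to give the required signature only when $q = 3$, yielding $G \cong SL_2(3)$; no compatible structure exists for $q \neq 3$.

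The main obstacle will be the last step: rigorously excluding the $SL_2(q)$ candidate for $q \neq 3$ in cases 40 and 41. That amounts to a careful analysis of how the Borel and the tame torus of $SL_2(q)$ interact under the prescribed ramification signature, together with verifying that the extra $C_2$ central factor cannot be absorbed into an inertia generator of the right order. Once this element-order bookkeeping is dispatched, the rest of the argument, namely invoking the $H^2$ computation and matching the split extension to cases 38 and 39, is routine.
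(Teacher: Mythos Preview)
Your proposal follows the same cohomological route as the paper: invoke Lemma~\ref{l2} to reduce to central extensions, read off $H^2(PSL_2(q),C_n)\cong C_{(n,2)}$ from \cite{AK}, and then match the two candidate extensions against the ramification data in Table~\ref{t2}. For part~(1) your Borel-complement argument is more detailed than what the paper actually writes; there the conclusion is asserted in one line ``according to the ramification structure,'' so your extra justification is not wrong, just more than the paper provides.

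The one place where you make the problem harder than it is is the exclusion of $q\neq 3$ in part~(2). You propose to analyze how the Borel and the tame torus of $SL_2(q)$ interact with the prescribed signature. The paper's argument is much shorter: once $n=2$ is forced by the divisibility $e_1^*=\tfrac{n(q-1)}{2}\mid q-1$, the unique non-split degree-two central extension is $SL_2(q)$ (citing Schur \cite{IS}), and one simply observes that for $q\neq 3$ the group $SL_2(q)$ contains no element of order $n\alpha=q(q-1)$ (or $n\beta$), so the required ramification cannot be realized. For $q=3$ one has $n\alpha=6$ and $n\beta=4$, and $SL_2(3)$ does contain such elements. So the ``element-order bookkeeping'' you anticipate is a one-line check on maximal cyclic subgroup orders in $SL_2(q)$, not a structural inertia argument.
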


\begin{proof}
By Lemma ~\ref{l2}, we know that extension is central and the second cohomology group is as follows; see Table 1 in \cite{AK}.
\begin{equation*}
H^2(PSL_2(q),C_{n}) \cong
\begin{cases}
1 & \text{if $p=2$, $p^f\neq4$}\\
C_{(2,n)} & \text{if $p>2$, $p^f\neq9$ or $p^f=4$}\\
C_{(6,n)} & \text{if $p^f=9$}
\end{cases}
\end{equation*}

(1) If $n$ is odd then there is only one extension. Since $C_n$ is abelian $G \cong PSL_2(q)\times C_n$. If $n$ is even, there are two extensions. According to ramification structure of cases 38 and 39 $G \cong PSL_2(q)\times C_n$. So for any $n$ $G \cong PSL_2(q)\times C_n$ for cases 38 and 39.

(2) By cases 40 and 41 of Theorem ~\ref{th1}, $n=2$. We know that $SL_2(q)$ is the only degree two central extension of $PSL_2(q)$; see \cite{IS}. If $q\neq 3$ then $SL_2(q)$ does not have elements of $n\alpha$ or $n\beta$. Therefore there are no possible groups for $q\neq 3$. But if $q=3$, $G \cong SL_2(3)$.

\end{proof}

\subsubsection{$\G \cong PGL_{2}(q)$} As previous subsection we know that $q=p^f$.

\begin{thm} \label{th11}
The automorphism group G such that $\G=G/C_n \cong PGL(2,q)$ is as follows. If $G$ has ramification as in cases 42 and 43 in Table ~\ref{t2} then $G \cong PGL(2,q) \times C_n$. There are no possible group extensions for the cases 44 and 45 in same table.

\end{thm}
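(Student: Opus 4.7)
The plan is to mirror the structure of Theorem~\ref{th6} (the $PSL_2(q)$ case). The strategy has three steps: compute the second cohomology $H^2(PGL_2(q), C_n)$ to enumerate the candidate extensions of $PGL_2(q)$ by $C_n$, use the element-order constraints coming from the ramification data in Theorem~\ref{th1} to select the correct extension for cases 42 and 43, and use the arithmetic constraints on $e_1^*$ to rule out cases 44 and 45.

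First, I would invoke [AK, Table 1] to read off $H^2(PGL_2(q), C_n)$ and thereby enumerate the possible extensions of $PGL_2(q)$ by $C_n$. The direct product $PGL_2(q) \times C_n$ is always one such extension, and for typical $q$ the cohomology group is small so that only a handful of candidates remain. Note that the commutator subgroup of $PGL_2(q)$ is $PSL_2(q)$ and $PGL_2(q)/PSL_2(q)$ is either trivial or $C_2$, so any conjugation action of $PGL_2(q)$ on $C_n$ factors through this small abelianization, reducing the analysis to essentially central extensions (up to at most a $C_2$ twist).

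For cases 42 and 43 of Table~\ref{t2}, I would argue in parallel with cases 38 and 39 of Theorem~\ref{th6} that the ramification structure is realized by the direct product. Specifically, in case 42 the canonical generators of orders $2\alpha$ and $2\beta$ both lift to elements of the same respective orders in $G$, which is exactly what $PGL_2(q) \times C_n$ supplies. In case 43 only the $2\beta$-generator is extended to order $2n\beta$, and since the direct product already contains an element of order $\mathrm{lcm}(2\beta, n)=2n\beta$ (using the arithmetic constraint on $n$ and $\beta = (q+1)/2$ implicit in the ramification set-up), the direct product again suffices. Any non-split extension would produce different element orders than those prescribed by $\C$ and can be ruled out on this basis.

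For cases 44 and 45, the constraint already derived in the proof of Theorem~\ref{th1} states that $e_1^* = n(q-1)$ must divide $q-1$, forcing $n = 1$. But with $n = 1$ the cover $\p_0 : \X_g \to \bP$ is trivial and $\X_g$ would itself be rational, contradicting $g \geq 2$. Hence no genuine cyclic curve realizes these ramification signatures and no extension group arises.

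The main obstacle will be the careful matching of candidate extensions with the ramification data in case 43. One must verify that the required element of order $2n\beta$ lives in the direct product without forcing a genuinely non-split extension, which requires knowing the element-order spectrum of $PGL_2(q)$ (in particular, that elements of order $2\beta = q+1$ arise from the normalizer of a non-split torus) and the compatibility of these orders with the factor $C_n$ in the direct product.
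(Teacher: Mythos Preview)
Your proposal is correct and follows essentially the same three-step route as the paper: cite $H^2(PGL(2,q),C_n)\cong C_{(n,2)}\times C_{(n,2)}$ from \cite{AK}, match the ramification data of cases 42--43 against the element orders available in the direct product, and invoke the constraint $n(q-1)\mid q-1$ from Theorem~\ref{th1} to force $n=1$ in cases 44--45. The paper's own proof is in fact terser than yours---it does not discuss the abelianization of $PGL_2(q)$ or the $\mathrm{lcm}$ subtlety in case 43 that you flag---so your added care is welcome but does not constitute a different argument.
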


\begin{proof}
We know that the second cohomology group is as follows; see Table 1 in \cite{AK}.
\begin{equation*}
H^2(PGL(2,q),C_{n}) \cong C_{(n,2)} \times C_{(n,2)}
\end{equation*}
According to the ramifications structure of the cases 42 and 43 and the second homology group, for any $n$, $G \cong PGL(2,q) \times C_n$. By Theorem ~\ref{th1}, only possible value for $n$ is one for the cases 44 and 45. Hence there are no possible groups for those cases.

\end{proof}

\subsection{The case p=5}
In this case $\G$ is isomorphic to one of the $C_m$, $D_m$, $A_4$, $S_4$, $U$, $K_m$, $PSL(2,q)$ or $PGL(2,q).$ Since the ramifications of covers $\phi:\bP \to \bP$ are similar to the ramifications in Theorem ~\ref{th1}, then signatures of covers $\Phi:\X \to \X^G$ and dimensions are same as corresponding cases in Table ~\ref{t2}.

\begin{thm}\label{th2}
Let $g \geq$ 2 be a fixed integer. Then the automorphism group $G$ of a cyclic curve of genus $g$ defined over a algebraically closed field $k$ such that char(k)=5 is one of the group in the Theorem ~\ref{th9}, ~\ref{th10}, ~\ref{th7}, ~\ref{th8}, ~\ref{th12}, ~\ref{th13},\\ ~\ref{th6}, ~\ref{th11}. Furthermore, signatures of covers $\Phi:\X \to \X^G$ and dimensions are same as corresponding cases in the Table ~\ref{t2}.
\end{thm}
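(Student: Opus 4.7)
The plan is to observe that the argument of Theorem~\ref{th1} is essentially characteristic-agnostic once the list of admissible reduced groups has been fixed. By Lemma~\ref{l1}, the reduced group $\bar G = G/C_n$ is one of $C_m$, $D_m$, $A_4$, $S_4$, $A_5$, $U$, $K_m$, $PSL_2(q)$, $PGL_2(q)$, but in characteristic $5$ the $A_5$ entry of Table~\ref{t1} is unavailable: the rational function listed there requires $p \neq 2,3,5$, and the auxiliary $A_5$ case for $p = 3$ is irrelevant. All other rows of Table~\ref{t1} only exclude $p = 2, 3$, hence remain valid at $p = 5$. Note that $PSL_2(5) \cong A_5$ is still realized, but through the $PSL_2(q)$ row with $q = 5$, which uses a distinct rational function $\phi(x)$.

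For each admissible $\bar G$, the function $z = \phi(x)$ and the ramification of $\phi: \bP \to \bP$ listed in Table~\ref{t1} are the same at $p = 5$ as at $p > 5$. Therefore the sub-case analysis of Theorem~\ref{th1}, which lifts the ramified primes of $\phi$ to primes of $\Phi: \X \to \X^G$ and applies the Riemann-Hurwitz formula (\ref{e2}), goes through without modification. The wild-ramification contribution $\beta_i = e_i^* q_i + q_i - 2$ for the cases $\bar G \in \{U, K_m, PSL_2(q), PGL_2(q)\}$ depends only on $q_i$ being a power of $p$ and $e_i^* \mid q_i - 1$, as in \cite{VM}, Theorem~1, which is valid for arbitrary $p$. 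Consequently rows $1$-$23$ and $32$-$45$ of Table~\ref{t2}, together with their displayed signatures and dimensions, carry over verbatim to $p = 5$, while rows $24$-$31$ are simply dropped.

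Finally, the isomorphism type of $G$ within each sub-case is determined by the second-cohomology and lifting arguments of Theorems~\ref{th9}, \ref{th10}, \ref{th7}, \ref{th8}, \ref{th12}, \ref{th13}, \ref{th6}, and \ref{th11}. These depend only on the abstract structure of $\bar G$, the cyclic group $C_n$, and the compatibility constraints of Remark~\ref{r1}; none of this uses the ambient characteristic. The one point requiring care is verifying that the $A_5$ signatures in rows $24$-$31$ do not resurface through the $PSL_2(5)$ sub-cases with $q = 5$; but this is transparent from Lemma~\ref{l1}, since the realizations of $\bar G$ in $PGL_2(k)$ listed in Table~\ref{t1} are distinguished by their rational functions, so $PSL_2(5)$ always contributes signatures from rows $38$-$41$ and never from rows $24$-$31$. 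Combining these observations yields the stated classification in characteristic $5$.
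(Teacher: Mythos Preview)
Your proof is correct and follows essentially the same approach as the paper: both argue that the $A_5$ case drops out at $p=5$ while every other reduced group $\bar G$ has unchanged ramification in Table~\ref{t1}, so the signature/dimension computations of Theorem~\ref{th1} and the extension classifications of Theorems~\ref{th9}--\ref{th11} carry over verbatim. Your treatment is in fact more careful than the paper's one-line proof, in that you explicitly address the potential confusion arising from $PSL_2(5)\cong A_5$ and explain why it contributes only through rows $38$--$41$ of Table~\ref{t2}.
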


\begin{proof}
Since the ramification of the  cases under $\G \cong C_m, D_m, A_4, S_4, U, K_m$, $PSL(2,q)$ and $PGL(2,q)$ are same as in Theorem ~\ref{th1}, the proofs of those cases are the same as proof of Theorem ~\ref{th1}. But the case $\G \cong A_5$ does not appear when $p=5$.
\end{proof}

\subsection{The case p=3}

In this case $\G \cong$ $C_m$, $D_m$, $A_5$, $U$, $K_m$, $PSL(2,q)$ or $PGL(2,q).$ The cases $\G \cong$ $C_m$, $D_m$, $U$, $K_m$, $PSL(2,q)$ and $PGL(2,q)$ have the same ramifications as in Theorem ~\ref{th1}. Hence those cases have signatures as in Table ~\ref{t2}. However the case $\G \cong$ $A_5$ has different ramification.

\begin{thm} \label{th3}
Let $g \geq$ 2 be a fixed integer. Then the automorphism group $G$ of a cyclic curve of genus $g$ defined over a algebraically closed field $k$ such that char(k)=3 is one of the group in the Theorems ~\ref{th9}, ~\ref{th10}, ~\ref{th12}, ~\ref{th13}, ~\ref{th6}, ~\ref{th11} or if $G$ has ramification as in cases $a,b$ in Table ~\ref{t3} then $G \cong A_5 \times C_n$. There are no possible group for cases $c,d$ in Table ~\ref{t3}.
Furthermore, signatures of covers $\Phi:\X \to \X^G$ and dimensions are same as corresponding cases in the Table ~\ref{t2} or Table ~\ref{t3}.
\end{thm}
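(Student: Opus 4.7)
The plan is to split the analysis according to the isomorphism type of $\G = G/C_n \le PGL_2(k)$. By Lemma~\ref{l1}, in characteristic $3$ the possibilities for $\G$ are $C_m$, $D_m$, $A_5$, $U$, $K_m$, $PSL_2(q)$, and $PGL_2(q)$; the groups $A_4$ and $S_4$ do not appear separately because $A_4 \cong PSL_2(3)$ and $S_4 \cong PGL_2(3)$ are already covered by those families. For every $\G$ except $A_5$, the rational function $z=\phi(x)$ of Table~\ref{t1} and its ramification are independent of the characteristic, so the signature and dimension computations carried out in the proof of Theorem~\ref{th1} transfer verbatim, and the corresponding group determinations in Theorems~\ref{th9}, \ref{th10}, \ref{th12}, \ref{th13}, \ref{th6}, \ref{th11} apply without modification.

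The substantive work is the case $\G \cong A_5$, where Table~\ref{t1} records the new ramification $(6,5)$ for $\phi:\bP \to \bP$. The $6$ arises because in characteristic $3$ the $3$-Sylow of $A_5$ is wildly ramified and fuses with an involution, giving a non-cyclic $S_3$ inertia with $e = e^*q = 2\cdot 3$; the $5$-branch point remains tame. Applying the wild Hurwitz formula (\ref{e2}) with $q_1=3$, $e_1^*=2$, $\beta_1=7$ at the first branch point and tame contribution $\beta_2 = 4$ at the second, I would enumerate the four natural liftings of the branch points to $G$, namely $(6,5,n,\ldots,n)$, $(6,5n,n,\ldots,n)$, $(6n,5,n,\ldots,n)$, $(6n,5n,n,\ldots,n)$; these become rows $a,b,c,d$ of Table~\ref{t3}, and Riemann--Hurwitz then yields the explicit expression for $\d$ in each case (for instance, row $a$ reduces to $\d = (g+n-1)/(30(n-1)) - 1$).

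To identify $G$ itself, I invoke Lemma~\ref{l2}, which forces every extension of $A_5$ by $C_n$ to be central, together with the Schur-multiplier computation from \cite{AK} recalled in the proof of Theorem~\ref{th5}. The only candidates are therefore $A_5 \times C_n$ and, when $n$ is even, the unique stem extension built from $SL_2(5)$. In cases $a$ and $b$ the ramification is realised inside $A_5 \times C_n$ since the wild $S_3$ stabiliser lifts trivially and only the tame second branch needs to acquire a factor of $n$. For cases $c$ and $d$, lifting the first branch point to index $6n$ forces the tame part to become $e_1^* = 2n$; but the divisibility condition $e_1^* \mid q_1 - 1 = 2$ from Lemma~\ref{l1} then requires $n \le 1$, contradicting $n \ge 2$. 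Hence cases $c$ and $d$ admit no group realisation.

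The main obstacle is the wild-ramification bookkeeping for $\G \cong A_5$: one must correctly compute the tame part $e_1^*$ at each lifted branch point, verify the divisibility constraint $e_1^* \mid q_1 - 1$, and check compatibility of the pulled-back inertia with the candidate central extensions $A_5 \times C_n$ and $SL_2(5) \cdot C_{n/2}$. Once this analysis is in place, the dimensions and group lists filling Table~\ref{t3} follow directly from (\ref{e2}) and the Schur-multiplier data, and the decomposition of $G$ across the remaining $\G$'s is immediate from the earlier theorems.
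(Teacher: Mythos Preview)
Your proposal is correct and follows essentially the same route as the paper: reduce the non-$A_5$ cases of $\G$ to the earlier theorems, and for $\G\cong A_5$ use the $(6,5)$ ramification with $q_1=3$, $e_1^*=2$, enumerate the four liftings $a$--$d$, invoke Lemma~\ref{l2} together with $H^2(A_5,C_n)=C_{(n,2)}$ to get $G\cong A_5\times C_n$ in cases $a,b$, and eliminate cases $c,d$ via the constraint $e_1^*=2n\mid q_1-1=2$. The only cosmetic slip is that this divisibility condition is drawn from \cite{VM}, Theorem~1, not from Lemma~\ref{l1} as you cite it.
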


\renewcommand{\arraystretch}{2}
\begin{longtable}{|c|c|c|c|}
\hline \hline
\multicolumn{1}{|c|}{$\#$} & \multicolumn{1}{|c|}{$\G$} & \multicolumn{1}{|c|}{$\delta(G,\C)$} & \multicolumn{1}{|c|}{$\C=(C_1,...,C_r)$}\\
\hline \hline
\endfirsthead
\hline \hline
\multicolumn{1}{|c|}{$\#$} & \multicolumn{1}{|c|}{$\G$} & \multicolumn{1}{|c|}{$\delta(G,\C)$} &  \multicolumn{1}{|c|}{$\C=(C_1,...,C_r)$}\\
\hline \hline
\endhead
\hline
\multicolumn{4}{r}{\itshape continued on the next page}\\
\endfoot
\multicolumn{2}{r}{ }
\endlastfoot
\hline
$a$ &  & $\frac{g+n-1}{30(n-1)}-1$ & $(6,5,n,...,n)$  \\
$b$ &  & $\frac{g+5n-5}{30(n-1)}-1$ & $(6,5n,n,...,n)$  \\
$c$ & $A_5$  & $\frac{g+6n-6}{30(n-1)}-1$ & $(6n,5,n,...,n)$ \\
$d$ & & $\frac{g}{30(n-1)}-1$ & $(6n,5n,n,...,n)$  \\
\hline \hline
\caption{The signature $\C$ and dimension $\d$ for $\G \cong A_5$ and $p=3$}\label{t3}
\end{longtable}

\begin{proof}
The proof of the  cases under $\G \cong C_m, D_m, A_4, S_4, U, K_m, PSL(2,q)$
and $PGL(2,q)$ are the same as in proof in Theorem ~\ref{th1}.

\textbf{Case $\G \cong A_5$ :} The ramification of $\p:\bP_x \to \bP_z$ is $(6,5)$. By Theorem 1 in \cite{VM}, the first point is wildly ramified and second one is tamely ramified. Hence in equation ~\ref{e2}, $\beta_1=e_1^*q_1+q_1-2$ for the first ramified place. By Theorem 1 in \cite{VM}, $q_1=3.$

(a) If both elements of orders 6 and 5 lift to elements of orders 5 and 6 then ramification is $\C=(6,5,n,...,n)$. In this case $e_1^*=2$. Hence by Riemann Hurwitz formula,
\[     2(g-1)=2(0-1)60n+60n\left(\left(\frac{6+3-2}{6}\right)+\left(\frac{5-1}{5}\right)+ \left(\frac{n-1}{n}\right)\left(\delta+1\right)\right).
\]
Then $\d=\frac{g+n-1}{30(n-1)}-1$.\\

(b) If element of order 5 lifts an element of order 5$n$, then ramification is $\C=(6,5n,n,...,n)$. As previous case $e_1^*=2$. Hence by Riemann Hurwitz formula,
\begin{multline*}
2(g-1)=2(0-1)60n+\\60n\left(\left(\frac{6+3-2}{6}\right)+\left(\frac{5n-1}{5n}\right)+ \left(\frac{n-1}{n}\right)\left(\delta+1\right)\right).
\end{multline*}
Then $\d=\frac{g-5n+5}{30(n-1)}-1$.\\

(c) If element of order 6 lifts an element of order 6$n$, then ramification is $\C=(6,5n,n,...,n)$. In this case $e_1^*=2n$. Furthermore $(2n,3)=1$ and $2n|(3-1)$. Hence only possible value for $n$ is one. So Riemann Hurwitz gives,
\begin{multline*}
2(g-1)=2(0-1)60n+\\60n\left(\left(\frac{6n+3-2}{6n}\right)+\left(\frac{5-1}{5}\right)+ \left(\frac{n-1}{n}\right)\left(\delta+1\right)\right).
\end{multline*}
Then $\d=\frac{g-6n+6}{30(n-1)}-1$.\\

(d) If both elements of orders 6 and 5 lift to elements of orders 5 and 6 then ramification is $\C=(6n,5n,n,...,n)$. As in previous case $e_1^*=2n$ and only possible value for $n$ is one.
\begin{multline*}
2(g-1)=2(0-1)60n+\\60n\left(\left(\frac{6n+3-2}{6n}\right)+\left(\frac{5n-1}{5n}\right)+ \left(\frac{n-1}{n}\right)\left(\delta+1\right)\right).
\end{multline*}
Then $\d=\frac{g}{30(n-1)}-1$.\\

By Lemma ~\ref{l2}, we know that extensions $A_5$ by $C_n$ is central. By Table 1 in  \cite{AK} $H^2(A_5,C_n)=C_{(n,2)}$. So if $G$ has ramification as in cases $a$ and $b$ then $G \cong A_5 \times C_n$. According to cases $c$ and $d$, only possible value for $n$ is one. So there are no possible group extensions for those two cases.
\end{proof}


\section{The Main Theorem}
We combine Theorems ~\ref{th9} - ~\ref{th3} altogether to make main theorem. This main theorem gives us all possible automorphism groups of genus $g\geq 2$ cyclic curves defined over the finite field of characteristic $p$.

\begin{thm} \label{th14}
Let $\X_g$ be a genus $g\geq2$ irreducible cyclic curve defined over an algebraically closed field $k$, $char(k)=p\neq2$, $G=Aut(\X_g)$, $\G$ its reduced automorphism group.

\begin{enumerate}

\item If $\G \cong C_m$ then $G \cong C_{mn}$ or
\begin{center}
$\left\langle \r, \s \right|\r^n=1,\s^m=1,\s\r\s^{-1}=\r^l \rangle$
\end{center}
where (l,n)=1 and $l^m\equiv 1$ (mod n).

\item If $\G \cong D_{2m}$ then $G \cong D_{2m} \times C_n$ or
\begin{align*}
\begin{split}
G_{5}=& \left\langle \r, \s, \t \right|\r^n=1,\s^2=\r,\t^2=1,(\s\t)^m=1,\s\r\s^{-1}=\r,\t\r\t^{-1}=\r^{n-1} \rangle\\
G_{6}=& D_{2mn} \\
G_{7}=& \left\langle \r, \s, \t \right|\r^n=1,\s^2=\r,\t^2=\r^{n-1},(\s\t)^m=1,\s\r\s^{-1}=\r,\t\r\t^{-1}=\r \rangle\\
G_{8}=& \left\langle \r, \s, \t \right|\r^n=1,\s^2=\r,\t^2=1,(\s\t)^m=\r^{\frac{n}{2}},\s\r\s^{-1}=\r,\t\r\t^{-1}=\r^{n-1} \rangle \\
G_{9}=& \left\langle \r, \s, \t \right|\r^n=1,\s^2=\r,\t^2=\r^{n-1},(\s\t)^m=\r^{\frac{n}{2}},\s\r\s^{-1}=\r,\t\r\t^{-1}=\r \rangle\end{split}
\end{align*}

\item If $\G \cong A_4$ and $p\neq3$ then $G \cong A_4 \times C_n$ or
\begin{align*}
\begin{split}
G'_{10}=& \left\langle \r, \s, \t \right|\r^n=1,\s^2=1,\t^3=1,(\s\t)^3=1,\s\r\s^{-1}=\r,\t\r\t^{-1}=\r^l \rangle\\
G'_{12}=& \left\langle \r, \s, \t \right|\r^n=1,\s^2=1,\t^3=\r^{\frac{n}{3}},(\s\t)^3=\r^{\frac{n}{3}},\s\r\s^{-1}=\r,\t\r\t^{-1}=\r^l \rangle\\
\end{split}
\end{align*}
where $(l,n)=1$ and $l^3\equiv 1$ (mod n) or
\begin{center}
$\left\langle \r, \s, \t \right|\r^n=1,\s^2=\r^{\frac{n}{2}},\t^3=\r^{\frac{n}{2}},(\s\t)^5=\r^{\frac{n}{2}},\s\r\s^{-1}=\r,\t\r\t^{-1}=\r \rangle $
\end{center}
or
\begin{align*}
\begin{split}
G_{10}=& \left\langle \r, \s, \t \right|\r^n=1,\s^2=1,\t^3=1,(\s\t)^3=1,\s\r\s^{-1}=\r,\t\r\t^{-1}=\r^k \rangle\\
G_{13}=& \left\langle \r, \s, \t \right|\r^n=1,\s^2=\r^{\frac{n}{2}},\t^3=1,(\s\t)^3=1,\s\r\s^{-1}=\r,\t\r\t^{-1}=\r^k \rangle\\
\end{split}
\end{align*}
where $(k,n)=1$ and $k^3\equiv 1$ (mod n).

\item If $\G \cong S_4$ and $p\neq3$ then $G \cong S_4 \times C_n$ or
\begin{align*}
\begin{split}
G_{16}=& \left\langle \r, \s, \t \right|\r^n=1,\s^2=1,\t^3=1,(\s\t)^4=1,\s\r\s^{-1}=\r^l,\t\r\t^{-1}=\r \rangle\\
G_{18}=& \left\langle \r, \s, \t \right|\r^n=1,\s^2=1,\t^3=1,(\s\t)^4=\r^{\frac{n}{2}},\s\r\s^{-1}=\r^l,\t\r\t^{-1}=\r \rangle\\
G_{20}=& \left\langle \r, \s, \t \right|\r^n=1,\s^2=\r^{\frac{n}{2}},\t^3=1,(\s\t)^4=1,\s\r\s^{-1}=\r^l,\t\r\t^{-1}=\r \rangle \\
G_{22}=& \left\langle \r, \s, \t \right|\r^n=1,\s^2=\r^{\frac{n}{2}},\t^3=1,(\s\t)^4=\r^{\frac{n}{2}},\s\r\s^{-1}=\r^l,\t\r\t^{-1}=\r \rangle\\
\end{split}
\end{align*}
where $(l,n)=1$ and $l^2\equiv 1$ (mod n).

\item If $\G \cong A_5$ and $p\neq5$ then $G \cong A_{5}\times C_{n}$ or
\begin{center}
$\left\langle \r, \s, \t \right|\r^n=1,\s^2=\r^{\frac{n}{2}},\t^3=\r^{\frac{n}{2}},(\s\t)^5=\r^{\frac{n}{2}},\s\r\s^{-1}=\r,\t\r\t^{-1}=\r \rangle $
\end{center}

\item If $\G \cong U$ then $G \cong U \times C_n$ or
\begin{multline*}
<\r,\s_1,\s_2,...,\s_t|\r^n=\s_1^p=\s_2^p=...=\s_t^p=1,\\ \s_i\s_j=\s_j\s_i, \s_i\r\s_i^{-1}=\r^{l}, 1\leq i,j\leq t>
\end{multline*}
where $(l,n)=1$ and $l^p \equiv 1$ (mod n).

\item If $\G \cong K_m$ then $G \cong$

\begin{multline*}
<\r,\s_1,...,\s_t,v|\r^n=\s_1^p=...=\s_t^p=v^m=1, \s_i\s_j=\s_j\s_i,\\
v\r v^{-1}=\r, \s_i\r\s_i^{-1}=\r^{l}, \s_iv\s_i^{-1}=v^{k}, 1\leq i,j\leq t >
\end{multline*}
where $(l,n)=1$ and $l^p \equiv 1$ (mod n), $(k,m)=1$ and $k^p \equiv 1$ (mod m) or
\begin{align*}
\begin{split}
\left\langle \r,\s_1,...,\s_t|\r^{nm}=\s_1^p=...=\s_t^p=1, \s_i\s_j=\s_j\s_i, \s_i\r\s_i^{-1}=\r^{l}, 1\leq i,j\leq t\right\rangle
\end{split}
\end{align*}
where $(l,nm)=1$ and $l^p \equiv 1$ (mod nm).

\item If $\G \cong PSL_{2}(q)$ then $G\cong PSL_2(q)\times C_n$ or $SL_2(3)$.

\item If $\G \cong PGL(2,q)$ then $G \cong PGL(2,q) \times C_n$.

\end{enumerate}
\end{thm}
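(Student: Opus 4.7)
The plan is to assemble the Main Theorem by invoking the case-by-case theorems proved in Section~3. Since the reduced automorphism group $\bar G$ is always one of $C_m$, $D_{2m}$, $A_4$, $S_4$, $A_5$, $U$, $K_m$, $PSL_2(q)$, $PGL_2(q)$ by Lemma~\ref{l1}, it suffices to loop over these possibilities and, in each case, read off from the appropriate theorem the list of groups that actually arise as $G=\Aut(\X_g)$.

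First I would dispose of the large-characteristic regime $p>2g+1$ (which reduces to characteristic zero by the remark at the start of \S3.1) and the generic case $2g+1\geq p>5$, where Theorem~\ref{th1} and Theorems~\ref{th9}--\ref{th11} give a complete classification for each shape of $\bar G$. For each $\bar G$ I would simply collect the presentations listed in the corresponding theorem, being careful to note which groups are isomorphic to each other (the isomorphisms $G_{10}\cong G_{11}\cong G_{12}$, $G_{13}\cong G_{14}\cong G_{15}$ in the $A_4$-case, $G_{16}\cong G_{17}$, $G_{18}\cong G_{19}$, $G_{20}\cong G_{21}$, $G_{22}\cong G_{23}$ in the $S_4$-case, and $G'_{10}\cong G'_{13}$, $G'_{12}\cong G'_{15}$) so that the final list contains only one representative from each isomorphism class. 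This is what produces the shortened lists in items~(3) and~(4) of the theorem statement.

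Next I would handle the residual characteristics $p=5$ and $p=3$ using Theorems~\ref{th2} and~\ref{th3}. For $p=5$ the case $\bar G\cong A_5$ is simply absent, so the list of possible $G$'s is a subset of what appears for $p>5$; no new groups need to be introduced. For $p=3$ the case $\bar G\cong A_5$ requires replacing the standard $(2,3,5)$ signature with the wild signature $(6,5)$ from Table~\ref{t3}, but Theorem~\ref{th3} shows that only $A_5\times C_n$ appears (the other two sub-cases having no admissible $n$), so it can be absorbed into the $A_5$-clause of the main list. Every other case for $p=3$ reduces verbatim to the theorems cited above.

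The only real bookkeeping obstacle is to verify that no isomorphism class is listed twice and that every case is genuinely realized, i.e.\ that the dimension $\delta(G,\C)$ computed in Table~\ref{t2} (or Table~\ref{t3}) is a non-negative integer for at least one choice of $g,n,m,q,t$. For the cases 5, 8 with $n$ even, $m$ odd (ruled out in Theorem~\ref{th10}), case 44, 45 with $n>1$ (ruled out in Theorem~\ref{th11}), and the $PSL_2(q)$ cases with $q\neq 3$ in 40, 41 (ruled out in Theorem~\ref{th6}), one must explicitly exclude the corresponding groups from the main list; this is the step most likely to introduce an error and the one I would check most carefully. Once these exclusions are verified, the union of the remaining groups over all $\bar G$ is exactly the list in items~(1)--(9), and the theorem follows.
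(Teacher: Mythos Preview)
Your proposal is correct and matches the paper's approach exactly: the paper offers no separate proof of Theorem~\ref{th14} at all, merely the sentence ``We combine Theorems~\ref{th9}--\ref{th3} altogether to make main theorem'' before the statement, so the theorem is presented purely as a compilation of the case-by-case results. Your write-up is in fact more detailed than the paper's, since you spell out the bookkeeping (the isomorphism identifications that shorten the lists, the handling of $p=3,5$, and the exclusions for cases 5, 8, 40, 41, 44, 45) that the paper leaves implicit.
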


\subsection{Automorphism groups of genus 3 cyclic curves}

Applying Theorem ~\ref{th1} through Theorem ~\ref{th3}, we obtain the automorphism groups of a genus 3 cyclic curve defined over algebraically closed field of characteristic 0,3,5,7 and bigger than 7. We listed GAP group ID of those groups in following theorem.

\begin{thm}
Let $\X_g$ be a genus 3 cyclic curve defined over a field of characteristic $p$. Then the automorphism groups of $\X_g$ are as follows.

\begin{description}
    \item[i)] $p=0$ : $(2,1)$, $(4,2)$, $(3,1)$, $(4,1)$, $(8,2)$, $(8,3)$, $(7,1)$, $(21,1)$, $(14,2)$, $(6,2)$, $(12,2)$,
    $(9,1)$, $(8,1)$, $(8,5)$, $(16,11)$, $(16,10)$, $(32,9)$, $(30,2)$, $(42,3)$, $(12,4)$, $(16,7)$, $(24,5)$, $(18,3)$,
    $(16,8)$, $(48,33)$, $(48,48)$.

    \item[ii)] $p=3$ : $(2,1)$, $(4,2)$, $(3,1)$, $(4,1)$, $(8,2)$, $(8,3)$, $(7,1)$, $(14,2)$, $(6,2)$, $(8,1)$, $(8,5)$, $(16,11)$, $(16,10)$, $(32,9)$, $(30,2)$, $(16,7)$, $(16,8)$, $(6,2)$.

    \item[iii)] $p=5$ : $(2,1)$, $(4,2)$, $(3,1)$, $(4,1)$, $(8,2)$, $(8,3)$, $(7,1)$, $(21,1)$, $(14,2)$, $(6,2)$, $(12,2)$,
    $(9,1)$, $(8,1)$, $(8,5)$, $(16,11)$, $(16,10)$, $(32,9)$, $(42,3)$, $(12,4)$, $(16,7)$, $(24,5)$, $(18,3)$,
    $(16,8)$, $(48,33)$, $(48,48)$.

    \item[iv)] $p=7$ : $(2,1)$, $(4,2)$, $(3,1)$, $(4,1)$, $(8,2)$, $(8,3)$, $(7,1)$, $(21,1)$, $(6,2)$, $(12,2)$,
    $(9,1)$, $(8,1)$, $(8,5)$, $(16,11)$, $(16,10)$, $(32,9)$, $(30,2)$, $(42,3)$, $(12,4)$, $(16,7)$, $(24,5)$, $(18,3)$,
    $(16,8)$, $(48,33)$, $(48,48)$.

    \item[v)] $p > 7$ : $(2,1)$, $(4,2)$, $(3,1)$, $(4,1)$, $(8,2)$, $(8,3)$, $(7,1)$, $(21,1)$, $(14,2)$, $(6,2)$, $(12,2)$,
    $(9,1)$, $(8,1)$, $(8,5)$, $(16,11)$, $(16,10)$, $(32,9)$, $(30,2)$, $(42,3)$, $(12,4)$, $(16,7)$, $(24,5)$, $(18,3)$,
    $(16,8)$, $(48,33)$, $(48,48)$.

\end{description}

\end{thm}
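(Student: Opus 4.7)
The plan is to specialize the general classification given by Theorems~\ref{th1}--\ref{th3} (with the extensions identified in Theorems~\ref{th9}--\ref{th3} and summarized in Theorem~\ref{th14}) to the value $g=3$, and then to compute the \textsc{SmallGroup} identifier of each resulting $G$ using GAP. I would carry this out in three stages.

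First, for each reduced group $\G$ listed in Tables~\ref{t2} and \ref{t3} and each signature $\C$ in the corresponding row, I would substitute $g=3$ into the dimension formula $\delta(G,\C)$ and retain only those triples $(\G,n,\C)$ for which $\delta$ is a non-negative integer. Every formula has the shape $(a\cdot 3+b)/(cm(n-1))$ or, in the wildly ramified cases, $(a\cdot 3+b)/(cmp^t(n-1))$, so the joint requirement $\delta\in\mathbb Z_{\geq 0}$ forces $m$, $n$, and (where present) $p^t$ into a short finite list that is immediate to enumerate by hand.

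Second, I would intersect this list with the characteristic-dependent constraints. For $p=0$ and $p>7$ the wildly ramified cases $\G\cong U,K_m$ cannot contribute, since the wild exponent $p^t$ would exceed $2g+1=7$; only the tame rows of Table~\ref{t2} survive. For $p=7$ the wild rows become available with $p^t=7$. For $p=5$, Theorem~\ref{th2} excludes $\G\cong A_5$; and for $p=3$, Theorem~\ref{th3} excludes $\G\cong A_4,S_4$ and replaces the $A_5$ rows of Table~\ref{t2} by the four rows of Table~\ref{t3}. For every triple surviving these filters, Theorems~\ref{th9}--\ref{th3} describe $G$ as a direct product, a central extension, or a semidirect product by explicit generators and relations of order $|G|=n\cdot|\G|$. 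I would then enter each presentation into GAP, apply \texttt{IdSmallGroup}, and collect the pairs; sorting and removing duplicates yields the five lists of the statement, with Theorem~\ref{th14} serving as a global cross-check.

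The main obstacle is twofold. Many of the presentations in Theorems~\ref{th10}, \ref{th7}, \ref{th8} depend on a twist exponent $k$ or $l$ satisfying a congruence $k^r\equiv 1\pmod n$, and distinct admissible exponents can yield isomorphic abstract groups; detecting these coincidences is essential to avoid listing the same \textsc{SmallGroup} ID twice and is most conveniently done in GAP group-by-group. The other delicate point is the small-characteristic bookkeeping: each of the IDs missing from the $p=3$ list (such as $(21,1)$, $(9,1)$, $(42,3)$, $(18,3)$, $(24,5)$, $(48,33)$, $(48,48)$), the ID $(30,2)$ absent from the $p=5$ list, and the ID $(14,2)$ absent from the $p=7$ list, must be shown to vanish for a specific reason, either a non-integer $\delta$, an excluded reduced group $\G$, or an inadmissible wild exponent in the ramification data; verifying each such disappearance constitutes the bulk of the labor.
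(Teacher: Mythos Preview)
Your proposal is correct and follows essentially the same approach as the paper: the paper's own justification consists of the single sentence ``Applying Theorem~\ref{th1} through Theorem~\ref{th3}, we obtain the automorphism groups of a genus 3 cyclic curve defined over algebraically closed field of characteristic $0,3,5,7$ and bigger than $7$'', and your plan simply spells out in detail how that specialization and GAP computation would be carried out.
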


\subsection{Automorphism groups of genus 4 cyclic curves}

Again applying Theorem ~\ref{th1} through Theorem ~\ref{th3}, we obtain the following groups as automorphism groups of a genus 4 cyclic curve defined over algebraically closed field of characteristic 0,3,5,7 and bigger than 7. We listed GAP group ID of those groups in following theorem.

\begin{thm}
Let $\X_g$ be a genus 4 cyclic curve defined over a field of characteristic $p$. Then the automorphism groups of $\X_g$ are as follows.

\begin{description}
    \item[i)] $p=0$ : $(2,1)$, $(4,2)$, $(3,1)$, $(6,2)$, $(9,2)$, $(5,1)$, $(10,2)$, $(20,1)$, $(9,1)$, $(27,4)$, $(18,2)$, $(15,1)$, $(4,1)$, $(20,4)$, $(18,3)$, $(8,3)$, $(40,8)$, $(12,5)$, $(36,12)$, $(54,4)$, $(16,7)$, $(20,5)$, $(32,19)$, $(24,10)$, $(8,4)$, $(60,9)$, $(36,11)$, $(24,3)$, $(72,42)$.

    \item[ii)] $p=3$ : $(2,1)$, $(4,2)$, $(3,1)$, $(6,2)$, $(5,1)$, $(10,2)$, $(20,1)$, $(9,1)$, $(18,2)$, $(15,1)$, $(4,1)$, $(20,4)$, $(8,3)$, $(40,8)$, $(12,5)$, $(16,7)$, $(20,5)$, $(32,19)$, $(24,10)$, $(8,4)$, $(9,2)$, $(18,5)$.

    \item[iii)] $p=5$ : $(2,1)$, $(4,2)$, $(3,1)$, $(6,2)$, $(9,2)$, $(5,1)$, $(10,2)$, $(20,1)$, $(9,1)$, $(27,4)$, $(18,2)$, $(4,1)$, $(18,3)$, $(8,3)$, $(12,5)$, $(36,12)$, $(54,4)$, $(16,7)$, $(20,5)$, $(32,19)$, $(24,10)$, $(8,4)$, $(60,9)$, $(36,11)$, $(24,3)$, $(72,42)$, $(10,2)$, $(18,5)$.

    \item[iv)] $p=7$ : $(2,1)$, $(4,2)$, $(3,1)$, $(6,2)$, $(9,2)$, $(5,1)$, $(10,2)$, $(20,1)$, $(9,1)$, $(27,4)$, $(18,2)$, $(15,1)$, $(4,1)$, $(20,4)$, $(18,3)$, $(8,3)$, $(40,8)$, $(12,5)$, $(36,12)$, $(54,4)$, $(16,7)$, $(20,5)$, $(32,19)$, $(24,10)$, $(8,4)$, $(60,9)$, $(36,11)$, $(24,3)$, $(72,42)$.

    \item[v)] $p > 7$ : $(2,1)$, $(4,2)$, $(3,1)$, $(6,2)$, $(9,2)$, $(5,1)$, $(10,2)$, $(20,1)$, $(9,1)$, $(27,4)$, $(18,2)$, $(15,1)$, $(4,1)$, $(20,4)$, $(18,3)$, $(8,3)$, $(40,8)$, $(12,5)$, $(36,12)$, $(54,4)$, $(16,7)$, $(20,5)$, $(32,19)$, $(24,10)$, $(8,4)$, $(60,9)$, $(36,11)$, $(24,3)$, $(72,42)$.

\end{description}

\end{thm}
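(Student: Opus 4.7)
The plan is to apply the classification results of Theorems~\ref{th1}, \ref{th2}, and \ref{th3} in the specific case $g=4$, together with the determination of the extension group $G$ in Theorems~\ref{th9}--\ref{th11} (collected as Theorem~\ref{th14}). Since these theorems give, for each reduced automorphism group $\G$ and each lifting behavior of the ramified points, both the dimension $\delta(G,\C)$ as a function of $g$, $n$, and the parameters of $\G$, together with the explicit presentation or isomorphism type of $G$, the proof reduces to a finite enumeration.

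First, for each of the nine possibilities $\G \in \{C_m, D_{2m}, A_4, S_4, A_5, U, K_m, PSL_2(q), PGL_2(q)\}$ and each sub-case (rows 1--45 of Table~\ref{t2}, plus rows $a$--$d$ of Table~\ref{t3} when $p=3$), I would substitute $g=4$ into the displayed expression for $\delta$ and solve for all nonnegative integer values of the relevant parameters $n$, $m$, $p^t$, $q$. This yields only finitely many candidates because $\delta\geq 0$ forces upper bounds on $n$ and $m$. For each candidate tuple I would then verify the divisibility and coprimality side conditions (for instance $m\mid p^t-1$ and $(m,p)=1$ in the $K_m$ cases, or $n\in\{1,2\}$ forced in cases 40--41 and $n=1$ in 44--45 for $PSL_2$ and $PGL_2$).

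Second, for each valid tuple I would read off the isomorphism class of $G$ from the presentation provided by the corresponding theorem in Section 3 and construct that group in GAP using the \emph{SmallGroup} library to record its ID. Iterating over all admissible $(\G,\C)$ and then filtering by the characteristic restrictions of Theorems~\ref{th1}--\ref{th3} produces the five lists. In characteristic $0$ and in $p>7$ only the tame cases of Table~\ref{t2} contribute; for $p=7,5,3$ additional wild sub-cases (those with $\G\cong U$, $K_m$, $PSL_2(q)$, $PGL_2(q)$, or the modified $A_5$-branching of Table~\ref{t3}) appear, while certain tame cases drop out (most notably $\G\cong A_5$ for $p=5$, and the $A_4$, $S_4$ cases for $p=3$).

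The main obstacle will be the bookkeeping: distinct presentations arising from different sub-cases or different values of $(l,k)$ in the presentations of $G_{10}$--$G_{15}$, $G_{16}$--$G_{23}$, etc.\ can yield isomorphic small groups, so one must identify GAP IDs carefully to avoid double counting, and then verify that every ID appearing in the final list is actually realized by at least one admissible $(\G,n,\C)$ with $g=4$. A secondary subtlety is ensuring that the wild-ramification divisibility conditions ($m\mid p^t-1$, $nm\mid q-1$, and $(nm,p)=1$) are simultaneously satisfiable at $g=4$ for the small primes $p=3,5,7$; this is where the case-lists for those characteristics shrink relative to $p>7$. Once these checks are tabulated, the remainder of the argument is a mechanical invocation of the presentations already established.
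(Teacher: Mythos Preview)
Your proposal is correct and follows essentially the same approach as the paper: the paper gives no proof beyond the sentence ``Again applying Theorem~\ref{th1} through Theorem~\ref{th3}, we obtain the following groups\ldots'' together with an implicit GAP computation, and your plan is a detailed expansion of exactly that procedure. If anything, your outline is more explicit than the paper about the enumeration of parameter tuples and the handling of the divisibility side conditions.
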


\end{document}